\documentclass[12pt]{amsart}


\usepackage[mathscr]{eucal}
\usepackage{amsmath,amssymb,amscd,amsthm}
\usepackage{color}
\usepackage{epsfig}
\newtheorem{theorem}{Theorem}

\newtheorem{definition}{Definition}
\newtheorem{lemma}{Lemma}

\theoremstyle{definition}

\theoremstyle{plane}

\def \beq{ \begin{equation} }
\def \eeq{\end{equation}}

\addtolength{\textwidth}{0cm}

\title{Homographic solutions of the curved 3-body problem}
\begin{document}
\maketitle
\markboth{Florin Diacu and Ernesto P\'erez-Chavela}{Homographic solutions of the curved $3$-body problem}
\author{\begin{center}
Florin Diacu\\
\smallskip
{\footnotesize Pacific Institute for the Mathematical Sciences\\
and\\
Department of Mathematics and Statistics\\
University of Victoria\\
P.O.~Box 3060 STN CSC\\
Victoria, BC, Canada, V8W 3R4\\
diacu@math.uvic.ca\\
}\end{center}

\begin{center}
and
\end{center}

\begin{center}
Ernesto P\'erez-Chavela\\
\smallskip
{\footnotesize
Departamento de Matem\'aticas\\
Universidad Aut\'onoma Metropolitana-Iztapalapa\\
Apdo.\ 55534, M\'exico, D.F., M\'exico\\
epc@xanum.uam.mx\\
}\end{center}

}

\vskip0.5cm

\begin{center}
\today
\end{center}

\begin{abstract}
In the 2-dimensional curved $3$-body problem, we prove the existence of Lagrangian 
and Eulerian homographic orbits, and provide their complete classification in the
case of equal masses. We also show that the only non-homothetic hyperbolic Eulerian solutions are the hyperbolic Eulerian relative equilibria, a result that proves their instability.

\end{abstract}

\section{Introduction}

We consider the $3$-body problem in spaces of constant curvature ($\kappa\ne 0$), 
which we will call {\it the curved $3$-body problem}, to distinguish it from its classical 
Euclidean ($\kappa=0$) analogue. The study of this problem might help us understand the nature of the physical space. Gauss allegedly tried to determine the nature of space by measuring the angles of a triangle formed by the peaks of three mountains. Even if the goal of his topographic measurements was different from what anecdotical history attributes to him (see \cite{Mill}), this method of deciding the nature of space remains valid for 
astronomical distances. But since we cannot measure the angles of cosmic triangles, we could alternatively check whether specific (potentially observable) motions of celestial bodies occur in spaces of negative, zero, or positive curvature, respectively.

In \cite{Diacu1}, we showed that while Lagrangian orbits (rotating equilateral triangles having the bodies at their vertices) of non-equal masses are known to occur for $\kappa=0$, they must have equal masses for $\kappa\ne 0$. Since Lagrangian solutions 
of non-equal masses exist in our solar system (for example, the triangle formed by the Sun, Jupiter, and the Trojan asteroids), we can conclude that, if assumed to have constant curvature, the physical space is Euclidean for distances of the order $10^1$ AU. The discovery of new orbits of the curved $3$-body problem, as defined here in the spirit of an old tradition, might help us extend our understanding of space to larger scales.

This tradition started in the 1830s, when Bolyai and Lobachevsky proposed a  {\it curved 2-body problem}, which was broadly studied (see most of the 77 references in \cite{Diacu1}). But until recently nobody extended the problem beyond two bodies. The newest results occur in \cite{Diacu1}, a paper in which we obtained a unified framework that offers the equations of motion of the {\it curved $n$-body problem} for any $n\ge 2$ and $\kappa\ne 0$. We also proved the existence of several classes of {\it relative equilibria}, including the Lagrangian orbits mentioned above. Relative equilibria are orbits for which the configuration of the system remains congruent with itself for all time, i.e.\ the distances between any two bodies are constant during the motion. 

So far, the only other existing paper on the curved $n$-body problem, treated in a unified context, deals with singularities, \cite{Diacu1bis}, a subject we will not approach here. But relative equilibria can be put in a broader perspective. They are also the object of Saari's conjecture (see \cite{Saari}, \cite{Diacu2}), which we partially solved for the curved $n$-body problem, \cite{Diacu1}. Saari's conjecture has recently generated a lot of interest in classical celestial mechanics (see the references in \cite{Diacu2}, \cite{Diacu3}) and is still unsolved for $n>3$. Moreover, it led to the formulation of Saari's homographic conjecture, \cite{Saari}, \cite{Diacu3}, a problem that is directly related to the purpose of this research.

We study here certain solutions that are more general than relative equilibria, namely orbits for which the configuration of the system remains similar with itself. In this class of solutions, the relative distances between particles may change proportionally during the motion, i.e.\ the size of the system could vary, though its shape remains the same. We will call these solutions {\it homographic}, in agreement with the classical terminology, \cite{Win}.

In the classical Newtonian case, \cite{Win}, as well as in more general classical contexts, \cite{Diacu0}, the standard concept for understanding homographic solutions is that of {\it central configuration}. This notion, however, seems to have no meaningful analogue in
spaces of constant curvature, therefore we had to come up with a new approach.

Unlike in Euclidean space, homographic orbits are not planar, unless they are relative equilibria. In the case $\kappa>0$, for instance, the intersection between a plane and a sphere is a circle, but the configuration of a solution confined to a circle cannot expand or contract and remain similar to itself. Therefore the study of homographic solutions that are not relative equilibria is apparently more complicated than in the classical case, in which all homographic orbits
are planar.

We focus here on three types of homographic solutions. The first, which
we call Lagrangian, form an equilateral triangle at every time instant. We
ask that the plane of this triangle be always
orthogonal to the rotation axis. This assumption seems to be natural because, 
as proved in \cite{Diacu1}, Lagrangian relative equilibria, which are particular
homographic Lagrangian orbits, obey this property. We prove the existence of  homographic Lagrangian orbits in Section 3, and provide their complete classification in the case of equal masses in Section 4, for $\kappa>0$, and Section 5, for $\kappa<0$. Moreover, we show in Section 6 that Lagrangian solutions with non-equal masses don't exist. 

We then study another type of homographic solutions of the curved $3$-body problem, which we call Eulerian, in analogy with the classical case that refers to bodies confined to
a rotating straight line. At every time instant, the bodies of an Eulerian homographic orbit are on a (possibly) rotating geodesic. In Section 7 we prove the existence of these orbits. Moreover, for equal masses, we provide their complete classification in Section 8, for $\kappa>0$, and Section 9, for $\kappa<0$. 

Finally, in Section 10, we discuss the existence of hyperbolic homographic solutions,
which occur only for negative curvature. We prove that when the bodies are on the same hyperbolically rotating geodesic, a class of solutions we call hyperbolic Eulerian, every orbit is a hyperbolic Eulerian relative equilibrium. Therefore hyperbolic Eulerian relative equilibria are unstable, a fact that makes them unlikely observable candidates in a (hypothetically) hyperbolic physical universe.


\section{Equations of motion}\label{equations}

We consider the equations of motion on $2$-dimensional manifolds of
constant curvature, namely spheres embedded in $\mathbb{R}^3$, for $\kappa>0$, 
and hyperboloids\footnote{The hyperboloid corresponds to Weierstrass's model
of hyperbolic geometry (see Appendix in \cite{Diacu1}).} embedded in the Minkovski 
space ${\mathbb{M}}^3$, for $\kappa<0$. 

Consider the masses $m_1, m_2, m_3>0$ in $\mathbb{R}^3$, for $\kappa>0$,
and in $\mathbb{M}^3$, for $\kappa<0$, whose positions are given by the vectors
${\bf q}_i=(x_i,y_i,z_i), \ i=1, 2, 3$. Let ${\bf q}=
({\bf q}_1, {\bf q}_2,{\bf q}_3)$ be the configuration of the system,
and ${\bf p}=({\bf p}_1, {\bf p}_2,{\bf p}_3)$, with ${\bf p}_i=m_i\dot{\bf q}_i$,
representing the momentum.
We define the gradient operator with respect to the vector ${\bf q}_i$ as
$$\widetilde\nabla_{{\bf q}_i}=(\partial_{x_i},\partial_{y_i},\sigma\partial_{z_i}),$$
where $\sigma$ is the {\it signature function},
\begin{equation}
\sigma=
\begin{cases}
+1, \ \ {\rm for} \ \ \kappa>0\cr
-1, \ \ {\rm for} \ \ \kappa<0,\cr
\end{cases}\label{sigma}
\end{equation}
and let $\widetilde\nabla$ denote the operator
$(\widetilde\nabla_{{\bf q}_1},\widetilde\nabla_{{\bf q}_2},\widetilde\nabla_{{\bf q}_3})$.
For the 3-dimensional vectors ${\bf a}=(a_x,a_y,a_z)$ and ${\bf b}=(b_x,b_y,b_z)$, 
we define the inner product
\begin{equation}
{\bf a}\odot{\bf b}:=(a_xb_x+a_yb_y+\sigma a_zb_z)
\label{dotpr}
\end{equation}
and the cross product
\begin{equation}
{\bf a}\otimes{\bf b}:=(a_yb_z-a_zb_y, a_zb_x-a_xb_z, 
\sigma(a_xb_y-a_yb_x)).
\end{equation}

The Hamiltonian function of the system describing the motion of the $3$-body problem in spaces of constant curvature is
$$H_\kappa({\bf q},{\bf p})=T_\kappa({\bf q},{\bf p})-U_\kappa({\bf q}),$$
where 
$$
T_\kappa({\bf q},{\bf p})={1\over 2}\sum_{i=1}^3m_i^{-1}({\bf p}_i\odot{\bf p}_i)(\kappa{\bf q}_i\odot{\bf q}_i)
$$
defines the kinetic energy and
\begin{equation}
U_\kappa({\bf q})=\sum_{1\le i<j\le 3}{m_im_j
|\kappa|^{1/2}{\kappa{\bf q}_i\odot{\bf q}_j}\over
[\sigma(\kappa{\bf q}_i
\odot{\bf q}_i)(\kappa{\bf q}_j\odot{\bf q}_j)-\sigma({\kappa{\bf q}_i\odot{\bf q}_j
})^2]^{1/2}}
\label{forcef}
\end{equation}
is the force function, $-U_\kappa$ representing the potential energy\footnote{In \cite{Diacu1}, we showed how this expression of $U_\kappa$ follows from the cotangent potential for $\kappa\ne 0$, and that $U_0$ is the Newtonian potential of the Euclidean problem, obtained as $\kappa\to 0$.}. Then the Hamiltonian form of the equations of motion is given by the system
\begin{equation}
\begin{cases}
\dot{\bf q}_i=
m_i^{-1}{\bf p}_i,\cr
\dot{\bf p}_i=\widetilde\nabla_{{\bf q}_i}U_\kappa({\bf q})-m_i^{-1}\kappa({\bf p}_i\odot{\bf p}_i)
{\bf q}_i, \ \  i=1,2,3, \ \kappa\ne 0,
\label{Ham}
\end{cases}
\end{equation}
where the gradient of the force function has the expression
\begin{equation}
{\widetilde\nabla}_{{\bf q}_i}U_\kappa({\bf q})=\sum_{\substack{j=1\\ j\ne i}}^3{m_im_j|\kappa|^{3/2}(\kappa{\bf q}_j\odot{\bf q}_j)[(\kappa{\bf q}_i\odot{\bf q}_i){\bf q}_j-(\kappa{\bf q}_i\odot{\bf q}_j){\bf q}_i]\over
[\sigma(\kappa{\bf q}_i
\odot{\bf q}_i)(\kappa{\bf q}_j\odot{\bf q}_j)-\sigma({\kappa{\bf q}_i\odot{\bf q}_j
})^2]^{3/2}}.
\label{gradient}
\end{equation}
The motion is confined to the surface of nonzero constant curvature $\kappa$, i.e.\ $({\bf q},{\bf p})\in {\bf T}^*({\bf M}_\kappa^2)^3$,  where ${\bf T}^*({\bf M}_\kappa^2)^3$ is the cotangent bundle of the configuration space $({\bf M}^2_\kappa)^3$, and
$$
{\bf M}^2_\kappa=\{(x,y,z)\in\mathbb{R}^3\ |\ \kappa(x^2+y^2+\sigma z^2)=1\}.
$$ 
In particular, ${\bf M}^2_1={\bf S}^2$ is the 2-dimensional sphere, and ${\bf M}^2_{-1}={\bf H}^2$ is the 2-dimensional hyperbolic plane, represented by the upper sheet of the hyperboloid of two sheets (see the Appendix of \cite{Diacu1} for more details). 
We will also denote ${\bf M}^2_\kappa$ by ${\bf S}^2_\kappa$ for $\kappa>0$,
and by ${\bf H}^2_\kappa$ for $\kappa<0$.

Notice that the $3$ constraints given by $\kappa{\bf q}_i\odot{\bf q}_i=1, i=1,2,3,$ imply that ${\bf q}_i\odot{\bf p}_i=0$, so the $18$-dimensional system \eqref{Ham} has  $6$ constraints.
The Hamiltonian function provides the integral of energy,
$$
H_\kappa({\bf q},{\bf p})=h,
$$
where $h$ is the energy constant. Equations \eqref{Ham} also have the integrals 
of the angular momentum,
\begin{equation}
\sum_{i=1}^3{\bf q}_i\otimes{\bf p}_i={\bf c},\label{ang}
\end{equation}
where ${\bf c}=(\alpha, \beta, \gamma)$ is a constant vector. Unlike in the
Euclidean case, there are no integrals of the center of mass and linear
momentum. Their absence complicates the study of the problem
since many of the standard methods don't apply anymore.

Using the fact that $\kappa{\bf q}_i\odot{\bf q}_i=1$ for $i=1,2,3$, we can write system
\eqref{Ham} as
\begin{equation}
\ddot{\bf q}_i=\sum_{\substack{j=1\\ j\ne i}}^3{m_j|\kappa|^{3/2}[{\bf q}_j-(\kappa{\bf q}_i\odot{\bf q}_j){\bf q}_i]\over
[\sigma-\sigma({\kappa{\bf q}_i\odot{\bf q}_j
})^2]^{3/2}}-(\kappa\dot{\bf q}_i\odot\dot{\bf q}_i){\bf q}_i, \ \ i=1,2,3,
\label{second}
\end{equation}
which is the form of the equations of motion we will use in this paper.


\section{Local existence and uniqueness of Lagrangian solutions}

In this section we define the Lagrangian solutions of the curved 3-body
problem, which form a particular class of homographic orbits. Then, 
for equal masses and suitable initial conditions, we prove their local existence and uniqueness.

\begin{definition}
A solution of equations \eqref{second} is called Lagrangian if, at every time
$t$, the masses form an equilateral triangle that is orthogonal to the $z$ axis. 
\label{deflag}
\end{definition}

According to Definition \ref{deflag}, the size of a Lagrangian solution can vary,
but its shape is always the same. Moreover, all masses have the same coordinate 
$z(t)$, which may also vary in time, though the triangle is always
perpendicular to the $z$ axis. 

We can represent a Lagrangian solution of the curved 3-body problem 
in the form
\begin{equation}
{\bf q}
=({\bf q}_1,{\bf q}_2, {\bf q}_3), \ \ {\rm with}\ \ {\bf q}_i=(x_i,y_i,z_i),\  i=1,2,3,
\label{lagsol}
\end{equation}
\begin{align*}
x_1&=r\cos\omega,& y_1&=r\sin\omega,& z_1&=z,\\
x_2&=r\cos(\omega +2\pi/3),& y_2&=r\sin(\omega +2\pi/3),& z_2&=z,\\
x_3&=r\cos(\omega +4\pi/3),& y_3&=r\sin(\omega +4\pi/3),& z_3&=z,
\end{align*}
where $z=z(t)$ satisfies $z^2=\sigma\kappa^{-1}-\sigma r^2$; $\sigma$ is the signature
function defined in \eqref{sigma}; $r:=r(t)$ is the {\it size function}; and $\omega:=\omega(t)$ is the {\it angular function}.

Indeed, for every time $t$, we have that $x_i^2(t)+y_i^2(t)+\sigma z_i^2(t)=\kappa^{-1},\ i=1,2,3$, which means that the bodies stay on the surface ${\bf M}_{\kappa}^2$, each body has the same $z$ coordinate, i.e.\ the plane of the triangle is orthogonal to the $z$ axis, and the angles between any two bodies, seen from the geometric center of the triangle, are always the same, so the triangle remains equilateral. Therefore representation \eqref{lagsol} of the Lagrangian orbits agrees with Definition \ref{deflag}.

\begin{definition}
A Lagrangian solution of equations \eqref{second} is called Lagrangian homothetic if
the equilateral triangle expands or contracts, but does not rotate around the $z$ axis. 
\end{definition}

In terms of representation \eqref{lagsol}, a Lagrangian solution is Lagrangian
homothetic if $\omega(t)$ is constant, but $r(t)$ is not constant. Such orbits occur, for
instance, when three bodies of equal masses lying initially in the same open hemisphere are released with zero velocities from an equilateral configuration, to end up in a triple collision.

\begin{definition}
A Lagrangian solution of equations \eqref{second} is called a Lagrangian relative 
equilibrium if the triangle rotates around the $z$ axis without expanding or contracting. 
\end{definition}

In terms of representation \eqref{lagsol}, a Lagrangian relative equilibrium occurs when $r(t)$ is constant, but $\omega(t)$ is not constant. Of course, Lagrangian homothetic solutions and Lagrangian relative equilibria, whose existence we proved in \cite{Diacu1}, are particular Lagrangian orbits, but we expect that the Lagrangian orbits are not reduced to them. We  now show this by proving the local existence and uniqueness of Lagrangian solutions that are neither Lagrangian homothetic, nor Lagrangian relative equilibria.


\begin{theorem}
In the curved $3$-body problem of equal masses, for every set of initial conditions 
belonging to a certain class, the local existence and uniqueness of a Lagrangian solution,
which is neither Lagrangian homothetic nor a Lagrangian relative equilibrium, is assured. \label{equal-masses}
\end{theorem}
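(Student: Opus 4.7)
The plan is to substitute the Lagrangian ansatz \eqref{lagsol} with $m_1=m_2=m_3=m$ directly into the equations of motion \eqref{second}, exploit the cyclic symmetry of the configuration to reduce the three vectorial equations to a single planar system of ODEs in the unknowns $r(t)$ and $\omega(t)$, and then apply the Picard--Lindel\"of theorem on an appropriate open domain of admissible sizes. The ``certain class'' of initial conditions mentioned in the statement will then consist of data lying in that open domain for which neither $r$ nor $\omega$ is forced to be constant.

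The first computational step is to evaluate the mutual inner products under the ansatz. Since $x_ix_j+y_iy_j=r^2\cos(2\pi/3)=-r^2/2$ for $i\ne j$ and $z_i=z_j=z$ with $z^2=\sigma\kappa^{-1}-\sigma r^2$, a short calculation gives $\kappa{\bf q}_i\odot{\bf q}_j=1-\tfrac{3}{2}\kappa r^2$ for every pair $i\ne j$, while $\kappa{\bf q}_i\odot{\bf q}_i=1$. In particular the three denominators appearing in \eqref{second} coincide and depend only on $r$, and the right-hand sides of the three vector equations become cyclic permutations of each other. Projecting the equation for $i=1$ onto the radial direction $(\cos\omega,\sin\omega,0)$, the tangential direction $(-\sin\omega,\cos\omega,0)$, and the vertical direction $(0,0,1)$, and using the constraint $r^2+\sigma z^2=\kappa^{-1}$ together with its time derivatives to express $z,\dot z,\ddot z$ in terms of $r,\dot r,\ddot r$, I expect to obtain a second-order system of the form
\begin{equation*}
\ddot r=F(r,\dot r,\dot\omega),\qquad \ddot\omega=G(r,\dot r,\dot\omega),
\end{equation*}
with $F$ and $G$ real analytic on the open set where $r>0$, $z\ne 0$, and $1-\tfrac{3}{2}\kappa r^2$ stays strictly inside the interval that keeps $[\sigma-\sigma(1-\tfrac{3}{2}\kappa r^2)^2]^{3/2}$ real, positive, and bounded away from zero.

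Local existence and uniqueness then follow from Picard--Lindel\"of applied to initial data $(r_0,\dot r_0,\omega_0,\dot\omega_0)$ in that open set. To exclude Lagrangian homothetic orbits I would impose $\dot\omega_0\ne 0$, and to exclude Lagrangian relative equilibria I would impose $\dot r_0\ne 0$; uniqueness then guarantees that the resulting solution has both $r$ and $\omega$ non-constant on a neighborhood of $t=0$, so it is neither homothetic nor a relative equilibrium. The main obstacle I anticipate is the bookkeeping in the reduction step: one must verify that the three vector equations \eqref{second} are genuinely consistent with the single-$z$, equal-angular-spacing ansatz, so that the $z$-component equation is automatically implied by the radial equation and the constraint rather than being an independent condition that could over-determine the system. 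A secondary issue is carefully identifying the open admissible region, in particular avoiding the equator $z=0$ for $\kappa>0$ and the collision or antipodal loci where $1-\tfrac{3}{2}\kappa r^2=\pm 1$, so that the reduced vector field remains locally Lipschitz and Picard--Lindel\"of applies.
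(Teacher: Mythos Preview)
Your proposal is correct and follows essentially the same route as the paper: substitute the ansatz, use the common value $\kappa{\bf q}_i\odot{\bf q}_j=1-\tfrac32\kappa r^2$ to reduce the three vector equations to the two scalar conditions (which the paper writes as $A=0$ and $B=0$, corresponding exactly to your radial and tangential projections), and then invoke standard local existence and uniqueness for the resulting analytic ODE in $(r,\dot r,\dot\omega)$. Your anticipated consistency check is also resolved as you expect: the $z$-equation yields the same condition $A=0$ as the radial projection, so it is redundant rather than over-determining.
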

\begin{proof}
We will check to see if equations \eqref{second} admit solutions of the form \eqref{lagsol}
that start in the region $z>0$ and for which both $r(t)$ and $\omega(t)$ are not constant. We compute then that
$$\kappa{\bf q}_i\odot{\bf q}_j=1-3\kappa r^2/2\ \ {\rm for}\ \ i,j=1,2,3, \ \ {\rm with} \ \ i\ne j,$$
\begin{align*}
\dot x_1&=\dot r\cos\omega-r\dot\omega\sin\omega,& \dot y_1&=\dot r\sin\omega+
r\dot\omega\cos\omega,
\end{align*}
$$\dot x_2=\dot r\cos\Big(\omega +{2\pi\over 3}\Big)-r\dot\omega\sin\Big(\omega +{2\pi\over 3}\Big),$$ 
$$\dot y_2=\dot r\sin\Big(\omega +{2\pi\over 3}\Big)+r\dot\omega\cos\Big(\omega +{2\pi\over 3}\Big),$$
$$\dot x_3=\dot r\cos\Big(\omega +{4\pi\over 3}\Big)-r\dot\omega\sin\Big(\omega +{4\pi\over 3}\Big),$$ 
$$\dot y_3=\dot r\sin\Big(\omega +{4\pi\over 3}\Big)+r\dot\omega\cos\Big(\omega +{4\pi\over 3}\Big),$$
\begin{equation}
\dot z_1=\dot z_2=\dot z_3=-\sigma r\dot r(\sigma\kappa^{-1}-\sigma r^2)^{-1/2},
\label{zeds}
\end{equation}
$$\kappa\dot{\bf q}_i\odot\dot{\bf q}_i=\kappa r^2\dot\omega^2+{\kappa\dot r^2\over
1-\kappa r^2} \ \ {\rm for}\ \ i=1,2,3,$$
$$\ddot x_1=(\ddot r-r\dot\omega^2)\cos\omega-(r\ddot\omega+2\dot r\dot\omega)\sin\omega,$$
$$\ddot y_1=(\ddot r-r\dot\omega^2)\sin\omega+(r\ddot\omega+2\dot r\dot\omega)\cos\omega,$$
$$\ddot x_2=(\ddot r-r\dot\omega^2)\cos\Big(\omega +{2\pi\over 3}\Big)-(r\ddot\omega+2\dot r\dot\omega)\sin\Big(\omega +{2\pi\over 3}\Big),$$
$$\ddot y_2=(\ddot r-r\dot\omega^2)\sin\Big(\omega +{2\pi\over 3}\Big)+(r\ddot\omega+2\dot r\dot\omega)\cos\Big(\omega +{2\pi\over 3}\Big),$$
$$\ddot x_3=(\ddot r-r\dot\omega^2)\cos\Big(\omega +{4\pi\over 3}\Big)-(r\ddot\omega+2\dot r\dot\omega)\sin\Big(\omega +{4\pi\over 3}\Big),$$
$$\ddot y_3=(\ddot r-r\dot\omega^2)\sin\Big(\omega +{4\pi\over 3}\Big)+(r\ddot\omega+2\dot r\dot\omega)\cos\Big(\omega +{4\pi\over 3}\Big),$$
$$\ddot z_1=\ddot z_2=\ddot z_3=-\sigma r\ddot r(\sigma\kappa^{-1}-\sigma r^2)^{-1/2}-
\kappa^{-1}\dot r^2(\sigma\kappa^{-1}-\sigma r^2)^{-3/2}.$$
Substituting these expressions into system \eqref{second}, we are led to the
system below, where the double-dot terms on the left indicate to which differential
equation each algebraic equation corresponds:
\begin{align*}
\ddot x_1: \ \ \ \ \ \ \ \ & A\cos\omega-B\sin\omega=0,\\
\ddot x_2:\ \ \ \ \ \ \ \ & A\cos\Big(\omega +{2\pi\over 3}\Big)-B\sin\Big(\omega +{2\pi\over 3}\Big)=0,\\
\ddot x_3:\ \ \ \ \ \ \ \ & A\cos\Big(\omega +{4\pi\over 3}\Big)-B\sin\Big(\omega +{4\pi\over 3}\Big)=0,\\
\ddot y_1:\ \ \ \ \ \ \ \ & A\sin\omega+B\cos\omega=0,\\
\ddot y_2:\ \ \ \ \ \ \ \ & A\sin\Big(\omega +{2\pi\over 3}\Big)+B\cos\Big(\omega +{2\pi\over 3}\Big)=0,\\
\ddot y_3:\ \ \ \ \ \ \ \ & A\sin\Big(\omega +{4\pi\over 3}\Big)+B\cos\Big(\omega +{4\pi\over 3}\Big)=0,\\
\ddot z_1, \ddot z_2, \ddot z_3:\ \ \ \ \ \ \ \ & A=0,
\end{align*}
where
$$A:=A(t)=\ddot r-r(1-\kappa r^2)\dot\omega^2+{\kappa r\dot r^2\over{1-\kappa r^2}}+
{24m(1-\kappa r^2)\over{r^2(12-9\kappa r^2)^{3/2}}},$$
$$B:=B(t)=r\ddot\omega+2\dot r\dot\omega.$$
Obviously, the above system has solutions if and only if $A=B=0$,
which means that the local existence and uniqueness of Lagrangian orbits with equal masses is equivalent to the existence of solutions of the system of differential equations
\begin{equation}
\begin{cases}
\dot r=\nu\cr
\dot w=-{2\nu w\over r}\cr
\dot\nu=r(1-\kappa r^2)w^2-{\kappa r\nu^2\over{1-\kappa r^2}}-
{24m(1-\kappa r^2)\over{r^2(12-9\kappa r^2)^{3/2}}}, \cr
\end{cases}
\label{prime}
\end{equation}
with initial conditions $r(0)=r_0, w(0)=w_0, \nu(0)=\nu_0,$ where $w=\dot\omega$.
The functions $r,\omega$, and $w$ are analytic, and as long as the initial
conditions satisfy the conditions $r_0>0$ for all $\kappa$, as well as $r_0<\kappa^{-1/2}$
for $\kappa>0$, standard results of the theory of differential equations guarantee the
local existence and uniqueness of a solution $(r,w,\nu)$ of equations \eqref{prime},
and therefore the local existence and uniqueness of a Lagrangian orbit with
$r(t)$ and $\omega(t)$ not constant. The proof is now complete.
\end{proof}


\section{Classification of Lagrangian solutions for $\kappa>0$}

We can now state and prove the following result:

\begin{theorem}
In the curved $3$-body problem with equal masses and $\kappa>0$ there are five classes of Lagrangian solutions:

(i) Lagrangian homothetic orbits that begin or end in total collision in finite time;

(ii) Lagrangian relative equilibria that move on a circle; 

(iii) Lagrangian periodic orbits that are neither Lagrangian homothetic nor Lagrangian relative equilibria;

(iv) Lagrangian non-periodic, non-collision orbits that eject at time $-\infty$,
with zero velocity, from the equator, reach a maximum distance from the equator, which depends on the initial conditions, and return to the equator, with zero velocity, at time
$+\infty$. 

None of the above orbits can cross the equator, defined as the great circle of the
sphere orthogonal to the $z$ axis.

(v) Lagrangian equilibrium points, when the three equal masses are fixed on the 
equator at the vertices of an equilateral triangle.

\label{homo}
\end{theorem}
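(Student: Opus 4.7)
My plan is to exploit the fact that the system \eqref{prime} for equal masses is fully integrable via two first integrals inherited from \eqref{Ham}, reduce it to a one-dimensional mechanical system with an explicit effective potential, and then read off the five classes from the bifurcation diagram of that potential. The second equation in \eqref{prime} integrates at once to $\tfrac{d}{dt}(wr^{2})=0$, so $C:=wr^{2}$ is an angular-momentum-like constant of motion. Substituting $w=C/r^{2}$ into \eqref{prime}, using the Hamiltonian \eqref{Ham} with $m_{1}=m_{2}=m_{3}=m$ as a second integral, and introducing the colatitude $\theta\in(0,\pi/2]$ via $r=\sin\theta/\sqrt{\kappa}$ (so that $\theta=0$ is the north pole and $\theta=\pi/2$ the equator) converts energy conservation into
\[
\dot\theta^{2}=\frac{2\kappa E}{3m}-V_{\mathrm{eff}}(\theta;C),\qquad V_{\mathrm{eff}}(\theta;C):=\frac{C^{2}\kappa^{2}}{\sin^{2}\theta}-\frac{2\sqrt{3}\,m\,\kappa^{3/2}}{3}\cdot\frac{2-3\sin^{2}\theta}{\sin\theta\sqrt{4-3\sin^{2}\theta}}.
\]

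The next step is to determine the shape of $V_{\mathrm{eff}}(\cdot;C)$ on $(0,\pi/2]$. Its boundary values are $V_{\mathrm{eff}}(0^{+};C)=-\infty$ if $C=0$ and $+\infty$ if $C\neq 0$, while $V_{\mathrm{eff}}(\pi/2;C)=C^{2}\kappa^{2}+\tfrac{2\sqrt{3}}{3}m\kappa^{3/2}$. The parity $\theta\mapsto\pi-\theta$ forces $V_{\mathrm{eff}}'(\pi/2;C)=0$ for every $C$, and expanding in $u=\cos^{2}\theta$ near $u=0$ gives $V_{\mathrm{eff}}''(\pi/2;C)=2\kappa^{3/2}\bigl(\sqrt{\kappa}\,C^{2}-\tfrac{8\sqrt{3}\,m}{3}\bigr)$; hence $\theta=\pi/2$ is a nondegenerate local maximum for $C^{2}<C_{*}^{2}:=\tfrac{8\sqrt{3}\,m}{3\sqrt{\kappa}}$ and a nondegenerate minimum for $C^{2}>C_{*}^{2}$. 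Interior critical points correspond, via the relative-equilibrium condition $\dot\nu=0$ read off from \eqref{prime}, to solutions of $C^{2}=R(r):=\tfrac{8mr}{\sqrt{3}\,(4-3\kappa r^{2})^{3/2}}$; a direct computation yields $R'(r)=\tfrac{8m(4+6\kappa r^{2})}{\sqrt{3}\,(4-3\kappa r^{2})^{5/2}}>0$, so $R$ is a strictly increasing bijection from $(0,1/\sqrt{\kappa})$ onto $(0,C_{*}^{2})$ and there is a unique interior critical point $\theta_{*}=\theta_{*}(C)\in(0,\pi/2)$, necessarily a global minimum, exactly when $C^{2}\in(0,C_{*}^{2})$, with none when $C^{2}\ge C_{*}^{2}$.

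The five classes now correspond bijectively to the qualitatively distinct admissible level sets of $V_{\mathrm{eff}}$. When $C=0$, $V_{\mathrm{eff}}(\cdot;0)$ is strictly increasing from $-\infty$ to $V_{\mathrm{eff}}(\pi/2;0)$, so any subcritical energy produces an orbit with $\dot\theta^{2}\sim\mathrm{const}/\theta$ near $\theta=0$ and hence a homothetic total collision in finite time—class (i); the data $(\theta,\dot\theta)=(\pi/2,0)$ gives the pointwise equilibrium (v). For $0<C^{2}<C_{*}^{2}$ the level $E=V_{\mathrm{eff}}(\theta_{*};C)$ is a relative equilibrium on the latitude circle $\theta_{*}$, contributing to (ii); energies strictly between $V_{\mathrm{eff}}(\theta_{*};C)$ and $V_{\mathrm{eff}}(\pi/2;C)$ give orbits oscillating between two interior turning points in $(0,\pi/2)$, hence periodic—class (iii); the critical level $E=V_{\mathrm{eff}}(\pi/2;C)$ produces the homoclinic orbit to the hyperbolic saddle at the equator, approached only as $t\to\pm\infty$ by the standard linearisation argument—class (iv). For $C^{2}\ge C_{*}^{2}$ the only orbit bounded in $(0,\pi/2]$ is the equatorial relative equilibrium, again in (ii). In every class the orbit lies in a sub-level set of $V_{\mathrm{eff}}$ not reaching past $\theta=\pi/2$, which proves the final ``no equator crossing'' clause.

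The delicate step is the analysis of $V_{\mathrm{eff}}$: I need both the monotonicity of $R$, so that interior critical points are in bijection with radii of relative equilibria and hence unique, and the sign of $V_{\mathrm{eff}}''(\pi/2;C)$, which controls the bifurcation at $C^{2}=C_{*}^{2}$ between cases (iii)/(iv) and a pure equatorial relative equilibrium. Once these two calculus facts are in place, the five classes fall out by inspection of the qualitative profiles of $V_{\mathrm{eff}}$, and the infinite-time approach in (iv) is a direct consequence of the hyperbolicity $V_{\mathrm{eff}}''(\pi/2;C)<0$.
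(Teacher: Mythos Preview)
Your approach---reducing to a one-degree-of-freedom system with effective potential $V_{\mathrm{eff}}(\theta)$ via the energy integral and the colatitude $\theta$---is genuinely different from the paper's. The paper works directly in the $(r,\nu)$ phase plane without ever invoking the energy integral: it counts fixed points by a Descartes-rule argument (Lemma~\ref{prima}) and then deduces the qualitative phase portrait from sign and monotonicity properties of the vector field, handling the singular line $r=\kappa^{-1/2}$ by ad hoc asymptotics. Your change of variable $r=\kappa^{-1/2}\sin\theta$ regularises that singularity and turns the classification into a reading of level sets of a smooth potential, which is tidier; the paper's fixed-point count and your monotonicity of $R(r)$ are equivalent computations, and your threshold $C_{*}^{2}=8m/(\sqrt{3}\sqrt{\kappa})$ is exactly the paper's condition $\kappa^{1/2}c^{2}=(8/\sqrt{3})m$.

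There is, however, a genuine gap in your exhaustiveness argument and in the ``no equator crossing'' clause. Your own $\theta$-equation $\ddot\theta=-\tfrac12 V_{\mathrm{eff}}'(\theta)$ is smooth on all of $(0,\pi)$, and nothing prevents initial data with energy strictly above $V_{\mathrm{eff}}(\pi/2;C)$. Such an orbit reaches $\theta=\pi/2$ with $\dot\theta\ne 0$ in finite time and continues into the other hemisphere: for $0<C^{2}<C_{*}^{2}$ these are the energies above the saddle level you used for class~(iv), and for $C^{2}>C_{*}^{2}$---where $\pi/2$ is a \emph{minimum} of $V_{\mathrm{eff}}$---\emph{every} non-equilibrium orbit does this. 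Your closing sentence (``in every class the orbit lies in a sub-level set of $V_{\mathrm{eff}}$ not reaching past $\theta=\pi/2$'') covers only the particular energy values you chose to enumerate, not all admissible data, so it neither shows the five classes are exhaustive nor rules out equator crossing. The paper's argument for non-crossing rests on the blow-up of the $(r,\nu)$ vector field at $r=\kappa^{-1/2}$, but your desingularised coordinate shows that this is a coordinate artefact rather than a dynamical obstruction; to close the argument you need an independent reason why Lagrangian data with $E'>V_{\mathrm{eff}}(\pi/2;C)$ are inadmissible, or else to account for those orbits explicitly.
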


The rest of this section is dedicated to the proof of this theorem.

Let us start by noticing that the first two equations of system \eqref{prime} 
imply that $\dot w=-{2\dot r w\over r}$, which leads to 
$$w=\frac{c}{r^2},$$
where $c$ is a constant. The case $c=0$ can occur only when $w=0$,
which means $\dot\omega=0$. Under these circumstances the angular
velocity is zero, so the motion is homothetic. These are the orbits whose
existence is stated in Theorem \ref{homo} (i). They occur 
only when the angular momentum is zero, and lead to a triple collision
in the future or in the past, depending on the sense of the velocity vectors.

For the rest of this section, we assume that $c\ne 0$. Then system \eqref{prime}
takes the form
\begin{equation}\label{lag2}
\begin{cases}
\dot r=\nu\cr 
\dot \nu=\frac{c^2(1-\kappa r^2)}{r^3} -{\kappa
r\nu^2\over{1-\kappa r^2}}- {24m(1-\kappa r^2)\over{r^2(12-9\kappa
r^2)^{3/2}}}. \cr
\end{cases}
\end{equation}

Notice that the term ${\kappa r\nu^2\over{1-\kappa r^2}}$ of the last equation arises from the derivatives $\dot z_1, \dot z_2, \dot z_3$ in \eqref{zeds}. But these derivatives would be zero if the equilateral triangle rotates along the equator, because $r$ is constant in this case, so the term ${\kappa r\nu^2\over{1-\kappa r^2}}$ vanishes. Therefore the existence of equilateral relative equilibria on the equator (included in statement (ii) above), and the
existence of equilibrium points (stated in (v))---results proved in \cite{Diacu1}---are in agreement with the above equations. Nevertheless, the term ${\kappa r\nu^2\over{1-\kappa r^2}}$ stops any orbit from crossing the equator, a fact mentioned before statement (v) of Theorem \ref{homo}.

Understanding system \eqref{lag2} is the key to proving Theorem \ref{homo}. We start
with the following facts:


\begin{lemma}
Assume $\kappa, m>0$ and $c\ne 0$. Then for $\kappa^{1/2}c^2-(8/\sqrt{3})m< 0$, system \eqref{lag2} has two fixed points, while for $\kappa^{1/2}c^2-(8/\sqrt{3})m\ge 0$ it has one fixed point.
\label{prima}
\end{lemma}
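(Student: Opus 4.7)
The plan is to characterize fixed points of \eqref{lag2} by setting $\dot r=0$ and $\dot\nu=0$, exploit a common factor in the $\dot\nu$ equation, and then reduce the remaining algebraic condition to a single-variable monotonicity problem.

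First I would note that $\dot r=0$ forces $\nu=0$, so the middle term $\kappa r\nu^2/(1-\kappa r^2)$ on the right-hand side of the second equation of \eqref{lag2} vanishes (for $r\neq\kappa^{-1/2}$). The condition $\dot\nu=0$ then reads
\begin{equation*}
\frac{c^{2}(1-\kappa r^{2})}{r^{3}}\;=\;\frac{24m(1-\kappa r^{2})}{r^{2}(12-9\kappa r^{2})^{3/2}},
\end{equation*}
which factors as $(1-\kappa r^{2})\bigl[\,c^{2}(12-9\kappa r^{2})^{3/2}-24mr\,\bigr]=0$. This gives the two branches I would analyze separately: the \emph{equatorial} branch $\kappa r^{2}=1$, and the \emph{interior} branch $c^{2}(12-9\kappa r^{2})^{3/2}=24mr$.

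For the equatorial branch, $r=\kappa^{-1/2}$ corresponds to the Lagrangian equilibrium on the equator described in item (v) of Theorem~\ref{homo}; it is a boundary/limiting fixed point of \eqref{lag2} (the singular term $\kappa r\nu^{2}/(1-\kappa r^{2})$ evaluates as $0/0$ with $\nu=0$ and extends by continuity), and is present for every admissible parameter choice. So the whole counting question reduces to how many roots the interior equation has in $(0,\kappa^{-1/2})$. Introducing $s=\kappa r^{2}\in(0,1)$ and squaring yields the one-parameter equation
\begin{equation*}
f(s)\;:=\;\frac{(12-9s)^{3}}{s}\;=\;\frac{576\,m^{2}}{\kappa c^{4}}\;=:\;C.
\end{equation*}
The core of the proof is thus to study $f$ on $(0,1)$.

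A direct computation gives
\begin{equation*}
f'(s)\;=\;-\,\frac{(12-9s)^{2}(18s+12)}{s^{2}},
\end{equation*}
which is strictly negative on $(0,1)$, together with the limits $f(0^{+})=+\infty$ and $f(1^{-})=27$. Hence $f$ is a strictly decreasing bijection $(0,1)\to(27,\infty)$, so the equation $f(s)=C$ has exactly one solution in $(0,1)$ when $C>27$ and no solution when $C\leq 27$. Translating the threshold $C=27$ gives $\kappa c^{4}=64m^{2}/3$, i.e.\ $\kappa^{1/2}c^{2}=(8/\sqrt{3})m$, so the interior branch contributes one fixed point precisely when $\kappa^{1/2}c^{2}-(8/\sqrt{3})m<0$ and none when the opposite inequality holds. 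Adding the always-present equatorial fixed point yields the two cases stated in the lemma.

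The only real subtlety I anticipate is the first step: noticing that $(1-\kappa r^{2})$ factors out of both surviving terms of $\dot\nu$ at $\nu=0$, so that the equatorial root is not lost and is correctly separated from the interior branch. Once that factorization is in hand, the monotonicity of $f$ is a routine derivative computation, and the threshold $(8/\sqrt{3})m$ drops out from rewriting $576m^{2}/(\kappa c^{4})=27$.
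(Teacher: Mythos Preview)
Your argument is correct. Both you and the paper factor out $(1-\kappa r^2)$ to isolate the equatorial fixed point $(\kappa^{-1/2},0)$, then analyze the remaining equation $c^2(12-9\kappa r^2)^{3/2}=24mr$; the difference lies in how that interior equation is handled. The paper squares it to obtain a cubic $p(x)$ in $x=r^2$, invokes Descartes's rule of signs together with the negativity of the discriminant of $p'$ to conclude $p$ has exactly one positive root, and then separately introduces $g(r)=c^2/r-24m(12-9\kappa r^2)^{-3/2}$, shows $g$ is strictly decreasing, and uses its sign at $r\to\kappa^{-1/2}$ to decide whether that root lies in $(0,\kappa^{-1/2})$. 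Your substitution $s=\kappa r^2$ collapses both steps into one: the monotonicity of $f(s)=(12-9s)^3/s$ on $(0,1)$ simultaneously gives uniqueness and, via the boundary value $f(1)=27$, the existence threshold. Your route is shorter and avoids the polynomial machinery; the paper's route has the minor advantage that the function $g$ it introduces is reused later in the flow analysis of Section~4.1.
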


\begin{proof}
The fixed points of system \eqref{lag2} are given by $\dot r=0=\dot \nu.$ 
Substituting
$\nu=0$ in the second equation of (\ref{lag2}), we obtain
$$\frac{1-\kappa r^2}{r^2}\left[\frac{c^2}{r} - \frac{24m}{(12-9\kappa
r^2)^{3/2}}\right] = 0.$$
The above remarks show that, for $\kappa>0$, $r=\kappa^{-1/2}$ is a fixed point, which
physically represents an equilateral relative equilibrium moving along the
equator. Other potential fixed points of system \eqref{lag2} are
given by the equation
$$c^2(12-9\kappa r^2)^{3/2} = 24mr,$$ whose solutions
are the roots of the polynomial
\begin{equation}\label{polynomial1}
729c^4\kappa^3r^6 - 2916c^4\kappa^2r^4 + 144(27c^4\kappa +
4m^2)r^2 - 1728.
\end{equation}
Writing $x=r^2$ and assuming  $\kappa >0$, this polynomial
takes the form
\begin{equation}\label{polynomial2}
p(x)=729\kappa^3x^3 - 2916c^4\kappa^2x^2 +
144(27c^4\kappa + 4m^2)x - 1728,
\end{equation}
and its derivative is given by
\begin{equation}\label{dpolynomial2}
p'(x)=2187c^4\kappa^3x^2 - 5832c^4\kappa^2x +
144(27c^4\kappa + 4m^2).
\end{equation}
The discriminant of $p'$ is
$-5038848c^4\kappa^3m^2<0.$

By Descartes's rule of signs, $p$ can have one or three positive roots. If $p$ 
has three positive roots, then $p'$ must have two positive roots, but 
this is not possible because its discriminant is negative. Consequently $p$ 
has exactly one positive root.

For the point $(r,\nu)=(r_0,0)$ to be a fixed point of
equations \eqref{lag2}, $r_0$ must satify the
inequalities $0<r_0\le\kappa^{-1/2}$. If we denote
\begin{equation}
g(r)=\frac{c^2}{r} - \frac{24m}{(12-9\kappa
r^2)^{3/2}},\label{g}
\end{equation}
we see that, for $\kappa>0$, $g$ is a decreasing function since
\begin{equation}
{d\over dr}g(r)=-{c^2\over r^2}-{648 m\kappa r\over (12-9\kappa r^2)^{5/2}}<0.
\label{derivg}
\end{equation}
When $r\to 0$, we obviously have that $g(r)>0$ since we assumed $c\ne 0$.
When $r\to \kappa^{-1/2}$, we have $g(r)\to \kappa^{1/2}c^2-(8/\sqrt{3})m$.
If $\kappa^{1/2}c^2-(8/\sqrt{3})m>0$, then $r_0>\kappa^{-1/2}$, so $(r_0,0)$
is not a fixed point. Therefore, assuming $c\ne 0$, a necessary condition that 
$(r_0,0)$ is a fixed point of system \eqref{lag2} with $0<r_0<\kappa^{-1/2}$  is that
$$\kappa^{1/2}c^2-(8/\sqrt{3})m< 0.$$
For $\kappa^{1/2}c^2-(8/\sqrt{3})m\ge 0,$ the only fixed point of
system \eqref{lag2} is $(r,\nu)=(\kappa^{-1/2},0)$. This conclusion completes the proof
of the lemma.
\end{proof}

\subsection{\bf The flow in the $(r,\nu)$ plane for $\kappa>0$} We will
now study the flow of system \eqref{lag2} in the $(r,\nu)$ plane for $\kappa>0$.
At every point with $\nu\ne 0$, the slope of the vector field is given by 
${d\nu\over dr}$, i.e.\ by the ratio ${\dot\nu\over\dot r}=h(r,\nu),$ where 
$$h(r,\nu)= \frac{c^2(1-\kappa r^2)}{\nu r^3} -{\kappa
r\nu\over{1-\kappa r^2}}- {24m(1-\kappa r^2)\over{\nu r^2(12-9\kappa
r^2)^{3/2}}}.$$
Since $h(r,-\nu)=-h(r,\nu)$, the flow of system \eqref{lag2} is symmetric with
respect to the $r$ axis for $r\in(0,\kappa^{-1/2}]$. Also notice that, except for 
the fixed point $(\kappa^{-1/2},0)$, system \eqref{lag2} is undefined on the 
lines $r=0$ and $r=\kappa^{-1/2}$. Therefore the flow of system
\eqref{lag2}  exists only for points $(r,\nu)$ in the band $(0,\kappa^{-1/2})\times \mathbb{R}$ and for the point $(\kappa^{-1/2},0)$.

Since $\dot r=\nu$, no interval on the $r$ axis can be an invariant set for
system \eqref{lag2}. Then the symmetry of the flow relative to the $r$ axis implies that 
orbits cross the $r$ axis perpendicularly. But since $g(r)\ne 0$ at every non-fixed point, 
the flow crosses the $r$ axis perpendicularly everywhere, except at the fixed points. 

Let us further treat the case of one fixed point and the case of two
fixed points separately.

\subsubsection{\bf The case of one fixed point} 
A single fixed point, namely $(\kappa^{-1/2},0)$, appears
when $\kappa^{1/2}c^2-(8/\sqrt{3})m\ge 0$. Then the function $g$, which is 
decreasing, has no zeroes for $r\in(0,\kappa^{-1/2})$, therefore 
$g(r)>0$ in this interval, so the flow always crosses the $r$ axis upwards. 

For $\nu\ne 0$, the right hand side of the second
equation of \eqref{lag2} can be written as
\begin{equation}
G(r,\nu)=g_1(r)g(r)+g_2(r,\nu),
\label{g*}
\end{equation}
where 
\begin{equation}
g_1(r)=\frac{1-\kappa r^2}{r^2}\ \ 
{\rm and}\ \ 
g_2(r,\nu)=-{\kappa r\nu^2\over 1-\kappa r^2}.
\label{g12}
\end{equation}
 But $\frac{d}{dr}g_1(r)=-2/r^3<0$
and $\frac{\partial}{\partial r}g_2(r,\nu)=-{\kappa\nu^2(1+\kappa r^2)\over(1-\kappa r^2)^2}<0.$
So, like $g$, the functions $g_1$ and $g_2$ are decreasing in $(0,\kappa^{-1/2})$, 
with $g_1, g>0$, therefore $G$ is a decreasing function as well.
Consequently, for $\nu= {\rm constant} >0$, the slope of the vector field
decreases from $+\infty$ at $r=0$ to $-\infty$ ar $r=\kappa^{-1/2}$. For
$\nu= {\rm constant}<0$, the slope of the vector field increases from
$-\infty$ at $r=0$ to $+\infty$ at $r=\kappa^{-1/2}$.

This behavior of the vector field forces every orbit to eject downwards 
from the fixed point, at time $t=-\infty$ and with zero velocity, on a trajectory tangent to the 
line $r=\kappa^{-1/2}$, reach slope zero at some moment in time, then cross the 
$r$ axis perpendicularly upwards and symmetrically return with final zero velocity, 
at time $t=+\infty$, to the fixed point (see Figure \ref{Fig1}(a)). So the flow of system  
\eqref{lag2} consists in this case solely of homoclinic orbits to the fixed point $(\kappa^{-1/2},0)$, orbits whose existence is claimed in Theorem \ref{homo} (iv).
Some of these trajectories may come very close to a total collapse, which
they will never reach because only solutions with zero angular momentum
(like the homothetic orbits) encounter total collisions, as proved in \cite{Diacu1bis}. 

So the orbits cannot reach any singularity of the line $r=0$, and neither can
they begin or end in a singularity of the line $r=\kappa^{-1/2}$. The reason 
for the latter is that such points are or the form $(\kappa^{-1/2},\nu)$ with
$\nu\ne 0$, therefore $\dot r\ne 0$ at such points. But the vector field tends
to infinity when approaching the line $r=\kappa^{-1/2}$, so the flow must 
be tangent to it, consequently $\dot r$ must tend to zero, which is a contradiction.
Therefore only homoclinic orbits exist in this case.

\begin{figure}[htbp] 
   \centering
   \includegraphics[width=2in]{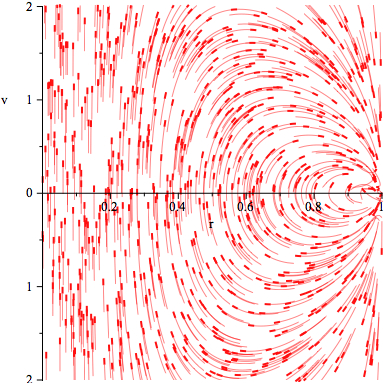}
    \includegraphics[width=2in]{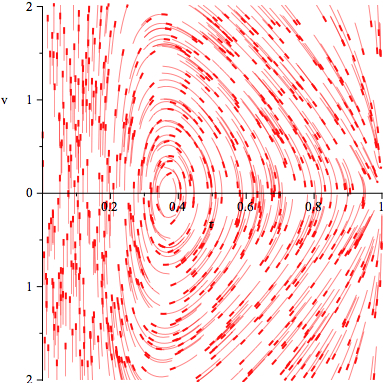}
   \caption{A sketch of the flow of system \eqref{lag2} for (a) $\kappa=c=1,m=0.24$, typical for one fixed point, and (b) $\kappa=c=1, m=4$, typical for two fixed points.}
   \label{Fig1}
\end{figure}


\subsubsection{\bf The case of two fixed points} Two fixed points, 
$(\kappa^{-1/2},0)$ and $(r_0,0)$, with $0<r_0<\kappa^{-1/2}$, occur when
$\kappa^{1/2}c^2-(8/\sqrt{3})m< 0$.
Since $g$ is decreasing in the interval $(0,\kappa^{-1/2})$, we
can conclude that 
$g(t)>0$ for $t\in(0,r_0)$ and $g(t)<0$ for $t\in(r_0,\kappa^{-1/2})$.
Therefore the flow of system \eqref{lag2} crosses the $r$ axis
upwards when $r<r_0$, but downwards for $r>r_0$ (see Figure \ref{Fig1}(b)). 

The function $G(r,\nu)$, defined in \eqref{g*}, fails to be decreasing in the interval $(0,\kappa^{-1/2})$ along lines of constant $\nu$, but it has no singularities in this interval and still maintains the properties
$$\lim_{r\to 0^+}G(r,\nu)=+\infty \ \ {\rm and}\ \lim_{r\to (\kappa^{-1/2})^-}G(r,\nu)=
-\infty.$$
Therefore $G$ must vanish at some point, so due to the symmetry of the vector
field with respect to the $r$ axis, the fixed point $(r_0,0)$ is surrounded by
periodic orbits. The points where $G$ vanishes are given by the nullcline $\dot\nu=0$,
which has the expression
$$\nu^2={(1-\kappa r^2)^2\over\kappa r^3}\bigg[\frac{c^2}{r}-
{24m\over{(12-9\kappa r^2)^{3/2}}}\bigg].$$
This nullcline is a disconnected set, formed by the fixed point $(\kappa^{-1/2},0)$ and a continuous curve, symmetric with
respect to the $r$ axis. Indeed, since
the equation of the nullcline can be written as $\nu^2={(1-\kappa r^2)^2\over\kappa r^3}
g(r)$, and $\lim_{r\to(\kappa^{-1/2})^-}g(r)=\kappa^{1/2}c^2-(8/\sqrt{3})m<0$ in the case of two fixed points (as shown in the proof of Lemma \ref{prima}), only the point $(\kappa^{-1/2},0)$ satisfies
the nullcline equation away from the fixed point $(r_0,0)$.

The asymptotic behavior of $G$ near $r=\kappa^{-1/2}$ also forces the flow to produce homoclinic orbits for the fixed point $(\kappa^{-1/2},0)$, as in the case discussed in Subsection 4.1.1. The existence of these two kinds of solutions is stated in Theorem \ref{homo} (iii) and (iv), respectively. The fact that orbits cannot begin or end at any
of the singularities of the lines $r=0$ or $r=\kappa^{-1/2}$ follows as in
Subsection 4.1.1. This remark completes the proof of Theorem 2.


\section{Classification of Lagrangian solutions for $\kappa<0$}

We can now state and prove the following result:

\begin{theorem}
In the curved $3$-body problem with equal masses and $\kappa<0$ there are eight 
classes of Lagrangian solutions:

(i) Lagrangian homothetic orbits that begin or end in total collision in finite time;

(ii) Lagrangian relative equilibria, for which the bodies move on a circle parallel with the $xy$ plane; 

(iii) Lagrangian periodic orbits that are not Lagrangian relative equilibria;

(iv) Lagrangian orbits that eject at time $-\infty$ from a certain relative equilibrium solution $\bf s$ (whose existence and position depend on the values of the parameters) and returns to it at time $+\infty$;

(v) Lagrangian orbits that come from infinity at time $-\infty$ and reach the
relative equilibrium $\bf s$ at time $+\infty$;

(vi) Lagrangian orbits that eject from the relative equilibrium $\bf s$
at time $-\infty$ and reach infinity at time $+\infty$;

(vii) Lagrangian orbits that come from infinity at time $-\infty$ and symmetrically return to infinity at time $+\infty$, never able to reach the
Lagrangian relative equilibrium $\bf s$;

(viii)  Lagrangian orbits that come from infinity at time $-\infty$, reach a position close to a total collision, and symmetrically return to infinity at time $+\infty$.

\label{homoneg}
\end{theorem}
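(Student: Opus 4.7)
The argument proceeds by exactly the same reduction used in the proof of Theorem~\ref{homo}: the first two equations of \eqref{prime} yield the angular-momentum integral $w=c/r^{2}$, and for $c\neq 0$ the dynamics reduces to the planar system \eqref{lag2}, now living on the half-strip $(r,\nu)\in(0,\infty)\times\mathbb{R}$ since $\kappa<0$ imposes no upper bound on $r$. The case $c=0$ yields $\dot\omega\equiv 0$ and produces class (i) verbatim as in the spherical classification.

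For $c\neq 0$, the fixed points of \eqref{lag2} on $\{\nu=0\}$ are the zeros of $f(r):=c^{2}(12-9\kappa r^{2})^{3/2}-24mr$. For $\kappa<0$ one has $f(0)=12^{3/2}c^{2}>0$, $f(r)\to+\infty$ as $r\to\infty$, and a direct computation gives
\begin{equation*}
f''(r)=27c^{2}|\kappa|\,\frac{12+18|\kappa|r^{2}}{(12+9|\kappa|r^{2})^{1/2}}>0,
\end{equation*}
so $f$ is strictly convex with a unique critical point $r_{*}$. Hence the system admits $0$, $1$, or $2$ Lagrangian relative equilibria according as $f(r_{*})>0$, $=0$, or $<0$, and this trichotomy is precisely the parameter-dependent criterion for the existence of $\mathbf{s}$ referred to in items (iv)--(vi). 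Linearizing at a fixed point $(r_{0},0)$, the Jacobian has trace zero and determinant of the same sign as $-f'(r_{0})$; when two fixed points $r_{1}<r_{2}$ exist, $r_{1}$ is a linear center and $r_{2}$ is a hyperbolic saddle, and we set $\mathbf{s}:=(r_{2},0)$.

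The symmetry $h(r,-\nu)=-h(r,\nu)$, which reverses the flow under reflection in the $r$-axis, promotes the linear center at $r_{1}$ to a nonlinear one and forces any trajectory that meets $\{\nu=0\}$ in the interior region to close up as a periodic orbit. The $\dot\nu=0$ nullcline
\begin{equation*}
\nu^{2}=\frac{(1-\kappa r^{2})^{2}}{\kappa r^{3}}\Bigl[\frac{c^{2}}{r}-\frac{24m}{(12-9\kappa r^{2})^{3/2}}\Bigr]
\end{equation*}
is a smooth symmetric curve through $(r_{1},0)$ and $\mathbf{s}$, supplemented by additional branches over $(0,r_{1}]$ and $[r_{2},\infty)$; this geometry organises the phase plane into the periodic nest around $r_{1}$ (class iii, with $r_{1}$ itself giving the second representative of class ii together with the saddle), the symmetric homoclinic loop at $\mathbf{s}$ (class iv), the two separatrices of $\mathbf{s}$ that escape to $r=+\infty$ in infinite time (classes v and vi), the region $\{r>r_{2}\}$ outside the loop filled by trajectories that come from and return to $r=\infty$ (class vii), and the region between the loop and $\{r=0\}$ filled by trajectories that come from $r=\infty$, approach arbitrarily close to total collision and, because $c\neq 0$ excludes an actual collision by \cite{Diacu1bis}, symmetrically return to $r=\infty$ (class viii).

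The principal technical obstacle is to show that the two branches of the saddle's invariant manifolds truly glue together into a genuine homoclinic loop encircling $r_{1}$, rather than spiralling between the inner and outer regions; this is obtained from time-reversibility combined with the fact that the nullcline has only the two branches identified above, so the unstable manifold of $\mathbf{s}$ is forced to cross $\{\nu=0\}$ transversally and the stable manifold then coincides with its reflection. A secondary issue is the asymptotic analysis as $r\to\infty$, where one must verify by a standard integral estimate on the right-hand side of \eqref{lag2} that both the escape to infinity and the approach to $\mathbf{s}$ along the separatrices take infinite time with $\nu\to 0^{+}$; the degenerate regimes $f(r_{*})\ge 0$, in which no homoclinic structure is present, are handled by the same arguments, yielding only classes (i), (ii), (vii) and (viii).
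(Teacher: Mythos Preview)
Your approach is the same phase-plane analysis as the paper's, but with one genuine simplification worth highlighting. By observing that $f(r)=c^{2}(12+9|\kappa|r^{2})^{3/2}-24mr$ is strictly convex with $f(0)>0$ and $f(r)\to+\infty$, you obtain the $0/1/2$ fixed-point trichotomy at once, and when there are two roots $r_{1}<r_{2}$ convexity forces $f'(r_{1})<0$ and $f'(r_{2})>0$, which immediately yields the center/saddle dichotomy through the Jacobian determinant. The paper instead passes to the cubic $p(x)$ in $x=r^{2}$, invokes Descartes's rule, and then needs two separate technical lemmas (Lemmas~\ref{help} and~\ref{bigG}) solely to exclude the degenerate possibility $\partial_{r}G(r_{i},0)=0$ at the linearization. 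Your convexity argument replaces all of that.

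That said, your nullcline description is incorrect. For $\kappa<0$ the prefactor $(1-\kappa r^{2})^{2}/(\kappa r^{3})$ is negative, so the equation $\nu^{2}=(\text{negative})\cdot g(r)$ has real solutions only where $g(r)\le 0$, i.e.\ only on $[r_{1},r_{2}]$; there are no ``additional branches over $(0,r_{1}]$ and $[r_{2},\infty)$''. This does not actually break the proof---the center/saddle structure together with the time-reversal symmetry already forces the qualitative phase portrait, exactly as in the paper---but the nullcline is not doing the organising work you attribute to it. Your region descriptions for classes (vii) and (viii) are also slightly off: the class~(viii) orbits in the paper come from infinity outside the saddle separatrix, wrap once around the homoclinic loop (thereby passing close to $r=0$), and return to infinity; they are not confined to ``the region between the loop and $\{r=0\}$''. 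Finally, in the degenerate one-fixed-point case $f(r_{*})=0$, the paper still obtains orbits asymptotic to the fixed point (types (v)--(vi)), not only the unbounded classes you list.
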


The rest of this section is dedicated to the proof of this theorem. Notice
first that the orbits described in Theorem \ref{homoneg} (i) occur for
zero angular momentum, when $c=0$, as for instance when the
three equal masses are released with zero velocities
from the Lagrangian configuration, a case in which a total collapse
takes place at the point $(0,0,|\kappa|^{-1/2})$. Depending on the initial
conditions, the motion can be bounded or unbounded. 
The existence of the orbits described in
Theorem \ref{homoneg} (ii) was proved in \cite{Diacu1}.
To address the other points of Theorem \ref{homoneg}, and show that no other
orbits than the ones stated there exist, we need to study the flow of system \eqref{lag2}
for $\kappa<0$. Let us first prove the following fact:


\begin{lemma}
Assume $\kappa<0, m>0$, and $c\ne 0$. Then system \eqref{lag2} has no fixed points
when $27c^4\kappa+4m^2\le 0$, and can have two, one, or no fixed points when $27c^4\kappa+4m^2> 0$.
\end{lemma}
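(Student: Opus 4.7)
The plan is to mirror the approach of Lemma \ref{prima}. A fixed point of \eqref{lag2} requires $\nu=0$, and then the second equation reduces to
\[
\frac{1-\kappa r^2}{r^2}\left[\frac{c^2}{r}-\frac{24m}{(12-9\kappa r^2)^{3/2}}\right]=0.
\]
For $\kappa<0$ the prefactor $1-\kappa r^2=1+|\kappa|r^2$ is strictly positive, and $r$ now ranges over all of $(0,\infty)$ (there is no spherical constraint cutting it off). Hence the only way to produce a fixed point is for the bracket to vanish, which after squaring the positive quantities involved and setting $x=r^2$ is equivalent to a positive root of the polynomial $p$ in \eqref{polynomial2}.

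The next step is to read off the sign pattern of the coefficients of $p$ when $\kappa<0$. The leading coefficient $729c^4\kappa^3$ is negative, the $x^2$-coefficient $-2916c^4\kappa^2$ is negative, and the constant term is negative, while the middle coefficient $144(27c^4\kappa+4m^2)$ carries the sign of $27c^4\kappa+4m^2$. If $27c^4\kappa+4m^2\le 0$, the sign sequence has no sign changes, so Descartes's rule of signs yields no positive roots and hence no fixed points, proving the first half of the statement. If $27c^4\kappa+4m^2>0$, the sign pattern $(-,-,+,-)$ has exactly two sign changes, so the number of positive roots is either $0$ or $2$, with the intermediate value $1$ attained only when a double root appears.

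To confirm that each of the three subcases actually occurs in the regime $27c^4\kappa+4m^2>0$, I would run a continuity argument in $m$ (keeping $c,\kappa$ fixed). At the threshold $m_\ast:=\frac{3\sqrt{3}}{2}c^2|\kappa|^{1/2}$ where $27c^4\kappa+4m^2=0$, direct substitution gives $p(x)=729c^4\kappa^2 x^2(\kappa x-4)-1728c^4<0$ for every $x>0$, so no positive root exists at $m=m_\ast$, and by continuity none exists for $m$ slightly above $m_\ast$. For $m$ sufficiently large, the coefficient $576m^2$ in the linear term dominates and makes $p$ positive on some interval of $x>0$, producing two positive roots. The intermediate value theorem applied to the roots as continuous functions of $m$ then yields a critical $m_{\ast\ast}>m_\ast$ at which a positive double root appears; this is the ``one fixed point'' case.

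The main obstacle is precisely this last step: Descartes's rule only bounds the number of positive roots from above, so it does not by itself show that the ``no positive root'' and ``one double root'' options are genuinely realized when $27c^4\kappa+4m^2>0$. This forces the continuity argument above, or equivalently a direct analysis of the shape of $g(r)=c^2/r-24m(12-9\kappa r^2)^{-3/2}$ through its critical points (noting $g(0^+)=+\infty$ and $g(\infty)=0^+$, so positive zeros must come in pairs with a tangency giving the double root).
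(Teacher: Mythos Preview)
Your proof is correct and follows essentially the same line as the paper's for the first half: both reduce the fixed-point count to positive roots of the cubic $p$ and invoke Descartes's rule when $27c^4\kappa+4m^2\le 0$ (all coefficients negative).

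For the case $27c^4\kappa+4m^2>0$ there is a minor methodological difference. You apply Descartes's rule directly to the sign pattern $(-,-,+,-)$ to get at most two positive roots. The paper instead passes to the monic cubic $\bar p$ and observes from Vieta's formulas that, since the constant term $-(64/27)\kappa^{-3}$ is positive for $\kappa<0$, the product of the roots is negative, forcing one real root to be negative; hence at most two can be positive. Both arguments are equally elementary and yield the same bound.

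Where your version goes further is in actually exhibiting parameter values realizing zero, one, and two fixed points via the continuity argument in $m$. The paper omits this and simply states ``As we will see later, all three cases occur,'' deferring the verification to the subsequent flow analysis (the subcases in Section~5.1). Your self-contained treatment is cleaner in that respect; the evaluation $p(x)=729c^4\kappa^2x^2(\kappa x-4)-1728c^4<0$ at the threshold $m_\ast$ is a nice explicit check. (Note: the paper writes the constant term of $p$ as $-1728$, but its own monic form $\bar p$ shows it should be $-1728c^4$, as you have it.)
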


\begin{proof}
The number of fixed points of system \eqref{lag2} is the same as
the number of positive zeroes of the polynomial $p$ defined in \eqref{polynomial2}.
If $27c^4\kappa+4m^2\le 0$, all coefficients of $p$ are negative, so
by Descartes's rule of signs, $p$ has no positive roots.

Now assume that $27c^4\kappa+4m^2> 0$. Then the zeroes of $p$ are 
the same as the zeroes of the monic polynomial (i.e.\ with leading coefficient 1):
$$
{\bar p}(x)=x^3-4\kappa^{-1}x^2+[48\kappa^{-2}+(64/81)c^{-4}\kappa^{-3}m^2]x-(64/27)\kappa^{-3},$$
obtained when dividing $p$ by the leading coefficient.
But a monic cubic polynomial can be written as
$$x^3 - (a_1+a_2+a_3)x^2 + (a_1a_2+a_2a_3+a_3a_1)x - a_1a_2a_3,$$
where $a_1,a_2,$ and $a_3$ are its roots. One of
these roots is always real and has the opposite sign of $-a_1a_2a_3$.
Since the free term of $\bar p$ is positive, one of
its roots is always negative, independently of the allowed values of the coefficients
$\kappa, m, c$. Consequently $p$ can have two positive roots (including the
possibility of a double positive root) or no positive root at all. Therefore system 
\eqref{lag2} can have two, one, or no fixed points. As we will see later, all three 
cases occur.
\end{proof}

We further state and prove a property, which we will use to establish Lemma \ref{bigG}:

\begin{lemma}
Assume $\kappa<0, m>0, c\ne 0$, let $(r_*,0)$ be a fixed point of
system \eqref{lag2}, and consider the function $g$ defined in \eqref{g}. Then ${d\over dr}g(r_*)=0$ if and only if $r_*=(-{2\over 3\kappa})^{1/2}$. Moreover, ${d^2\over dr^2}g(r_*)>0$.
\label{help}
\end{lemma}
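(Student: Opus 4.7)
The plan is to read off $g'(r_*)=0$ from the explicit formula \eqref{derivg}, combine it with the fixed-point identity $g(r_*)=0$, and eliminate the constant $c^2$ by taking a ratio. Both relations have the form ``$c^2\cdot(\text{power of }12-9\kappa r_*^2)=(\text{multiple of }mr_*^k)$'', so dividing one by the other erases $c^2$ and $m$ and leaves a linear equation in $r_*^2$. Concretely, $g(r_*)=0$ reads $c^2(12-9\kappa r_*^2)^{3/2}=24mr_*$, and the vanishing of
$$g'(r_*)=-\frac{c^2}{r_*^2}-\frac{648\,m\kappa r_*}{(12-9\kappa r_*^2)^{5/2}}$$
reads $c^2(12-9\kappa r_*^2)^{5/2}=-648\,m\kappa r_*^3$. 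Dividing gives $12-9\kappa r_*^2=-27\kappa r_*^2$, hence $r_*^2=-2/(3\kappa)$; note this is positive precisely because $\kappa<0$. The converse is immediate: if $r_*^2=-2/(3\kappa)$ then $12-9\kappa r_*^2=18$, and using $g(r_*)=0$ to substitute $c^2=24mr_*/18^{3/2}$ back into the displayed expression for $g'(r_*)$ makes it vanish.

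For the second derivative, I would differentiate $g'$ once more, obtaining
$$g''(r)=\frac{2c^2}{r^3}-\frac{648\,m\kappa}{(12-9\kappa r^2)^{5/2}}-\frac{29160\,m\kappa^{2}r^{2}}{(12-9\kappa r^2)^{7/2}},$$
then evaluate at the distinguished $r_*$. Using $12-9\kappa r_*^2=18$, $\kappa r_*^2=-2/3$, and $c^2=24mr_*/18^{3/2}$ to eliminate $c^2$, every term becomes a rational multiple of $m\kappa/18^{3/2}$; collecting the constants should yield $g''(r_*)=-48\,m\kappa/18^{3/2}$, which is strictly positive since $\kappa<0$ and $m>0$.

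The only mildly delicate point is arithmetic bookkeeping in the second step: the three terms of $g''(r_*)$ involve different powers of $18$, and one must convert $1/r_*^2=-3\kappa/2$ and $\kappa r_*^2=-2/3$ consistently so that the $18^{5/2}$ and $18^{7/2}$ denominators reduce to the common $18^{3/2}$ scale before summing. No conceptual obstacle is anticipated; the lemma is essentially a ``critical point at the fixed point'' computation whose outcome is forced by the two algebraic identities (*) $c^2(12-9\kappa r_*^2)^{3/2}=24mr_*$ and (**) $\kappa r_*^2=-2/3$.
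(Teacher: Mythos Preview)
Your proposal is correct and follows essentially the same approach as the paper: both combine the fixed-point identity $g(r_*)=0$ with the vanishing of $g'(r_*)$ to eliminate $c^2$ and obtain the linear relation in $r_*^2$, then evaluate the explicit formula for $g''$ at $r_*=(-2/(3\kappa))^{1/2}$ using $g(r_*)=0$ once more. Your division trick and the paper's substitution are two ways of writing the same elimination; your final value $g''(r_*)=-48\,m\kappa/18^{3/2}$ differs from the constant the paper records, but a direct check confirms your arithmetic, and in any case both expressions are positive for $\kappa<0$, which is all the lemma requires.
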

\begin{proof}
Since $(r_*,0)$ is a fixed point of system \eqref{lag2}, it follows that
$g(r_*)=0$. Then it follows from relation \eqref{g} that $(12-9\kappa r_*^2)^{3/2}=24mr_*/c^2$. Substituting this value of $(12-9\kappa r_*^2)^{3/2}$ into the equation ${d\over dr}g(r_*)=0$, which is equivalent to
$${648m\kappa r_*\over (12-9\kappa r_*^2)^{5/2}}=-{c^2\over r_*^2},$$
 it follows that $27\kappa/(12-9\kappa r_*^2)=-1/r_*^2$. Therefore $r_*=(-{2\over 3\kappa})^{1/2}$. Obviously, for this value of $r_*$, $g(r_*)=0$, so the first part
 of Lemma \ref{help} is proved. To prove the second part, substitute 
 $r_*=(-{2\over 3\kappa})^{1/2}$ into the equation $g(r_*)=0$, which is
 then equivalent with the relation
 \begin{equation}
 9\sqrt{3}c^2(-\kappa)^{1/2}-4m=0.
 \label{intermediate}
 \end{equation}
 Notice that 
 $${d^2\over dr^2}g(r)={2c^2\over r^3}-{648m\kappa\over (12-9\kappa r^2)^{5/2}}-
{29160m\kappa^2r^2\over(12-9\kappa r^2)^{7/2}}.$$
Substituting for  $r_*=(-{2\over 3\kappa})^{1/2}$ in the above equation, and using \eqref{intermediate}, we are led to the conclusion that ${d^2\over dr^2}g(r_*)=-(2\sqrt{3}+6\sqrt{2})m\kappa/9\sqrt{6}$, which is positive for $\kappa<0$. This 
completes the proof.
\end{proof}

The following result is important for understanding a qualitative aspect of the flow
of system \eqref{lag2}, which we will discuss later in this section.

\begin{lemma}
Assume $\kappa<0, m>0, c\ne 0$, and let $(r_*,0)$ be a fixed point of
system \eqref{lag2}. If ${\partial\over\partial r}G(r_*,0)=0$, then ${\partial^2\over\partial r^2}G(r_*,0)>0$, where
$G$ is defined in \eqref{g*}.
\label{bigG}
\end{lemma}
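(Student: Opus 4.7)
The plan is to reduce the computation of the partial derivatives of $G$ at $(r_*,0)$ to conditions purely on the single-variable function $g$, and then invoke Lemma \ref{help}.

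First I would exploit the decomposition $G(r,\nu) = g_1(r)g(r) + g_2(r,\nu)$ recorded in \eqref{g*}, combined with the structural observation that $g_2(r,\nu) = -\kappa r\nu^2/(1-\kappa r^2)$ is \emph{quadratic} in $\nu$. Consequently $g_2$ together with every pure $r$-derivative of it vanishes identically on the line $\nu = 0$. Restricted to that line, $G$, $\partial_r G$, and $\partial_r^2 G$ therefore coincide with the corresponding derivatives of the product $g_1 g$.

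Next I would bring in the hypothesis that $(r_*,0)$ is a fixed point. This gives $G(r_*,0) = g_1(r_*)g(r_*) = 0$, and since $g_1(r_*) = (1-\kappa r_*^2)/r_*^2 > 0$ (the inequality $1 - \kappa r_*^2 > 1$ is forced by $\kappa<0$), one concludes $g(r_*) = 0$. Differentiating once yields $\partial_r G(r_*,0) = g_1'(r_*)g(r_*) + g_1(r_*)g'(r_*) = g_1(r_*)g'(r_*)$, so the vanishing hypothesis $\partial_r G(r_*,0) = 0$, combined with $g_1(r_*) > 0$, forces $g'(r_*) = 0$. At this point Lemma \ref{help} applies: it delivers both $r_* = \sqrt{-2/(3\kappa)}$ and, crucially, the sign $g''(r_*) > 0$.

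Differentiating a second time then gives
\[
\partial_r^2 G(r_*,0) = g_1''(r_*)g(r_*) + 2g_1'(r_*)g'(r_*) + g_1(r_*)g''(r_*),
\]
and the first two summands vanish because $g(r_*) = g'(r_*) = 0$, leaving $\partial_r^2 G(r_*,0) = g_1(r_*)g''(r_*) > 0$. The only delicate point in the argument is not a calculation but a bookkeeping observation: once one notices that the $\nu$-dependence of $G$ is carried entirely by the quadratic-in-$\nu$ term $g_2$, every $r$-derivative of $G$ at $\nu = 0$ reduces to the derivative of $g_1 g$, and the hypothesis of the lemma collapses to precisely the hypothesis of Lemma \ref{help}, from which the conclusion is immediate.
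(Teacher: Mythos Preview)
Your proof is correct and follows essentially the same route as the paper's own argument: reduce to $g(r_*)=0$, then to $g'(r_*)=0$, invoke Lemma \ref{help}, and read off $\partial_r^2 G(r_*,0)=g_1(r_*)g''(r_*)>0$. Your upfront observation that $g_2$ is quadratic in $\nu$ (hence all its $r$-derivatives vanish along $\nu=0$) is a slightly cleaner way to dispose of the $g_2$ contributions than the paper's approach of computing $\partial_r g_2$ explicitly, but the substance is the same.
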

\begin{proof}
Since $(r_*,0)$ is a fixed point of \eqref{lag2}, $G(r_*,0)=0$. But for $\kappa<0$,
we have $g_1(r_*)>0$, so necessarily $g(r_*)=0$. Moreover,
${d\over dr}g_1(r_*)\ne 0,$ and since ${\partial\over \partial r}g_2(r,\nu)=
-{{\kappa\nu^2(1+\kappa r^2)}\over {(1-\kappa r^2)^2}}$, it follows that ${\partial\over \partial r}g_2(r_*,0)=0$.
But 
$${\partial G\over \partial r}(r,\nu)={d\over dr}g_1(r)\cdot g(r)+g_1(r){d\over dr}g(r)+{\partial\over \partial r}
g_2(r,\nu),$$
so the condition ${\partial\over \partial r}G(r_*,0)=0$ implies that ${d\over dr}g(r_*)=0$.
By Lemma \ref{help}, $r_*=(-{2\over 3\kappa})^{1/2}$ and ${d^2\over dr^2}g(r_*)>0$.
Using now the fact that
$${\partial^2 G\over \partial r^2}(r,\nu)={d^2\over dr^2}g_1(r)g(r)+2{d\over dr}g_1(r){d\over dr}g(r)+
g_1(r){d^2\over dr^2}g(r)+{\partial^2\over \partial r^2}g_2(r,\nu),$$
it follows that ${\partial^2\over \partial r^2}G(r_*,0)=g_1(r_*){d^2\over dr^2}g(r_*).$
Since Lemma \ref{help} implies that ${d^2\over dr^2}g(r_*)>0$, and we know that $g_1(r_*)>0$, it follows that ${\partial^2\over\partial r^2}G(r_*,0)>0$, a conclusion
that completes the proof. 
\end{proof}

\subsection{The flow in the $(r,\nu)$ plane for $\kappa<0$} We will now study the
flow of system \eqref{lag2} in the $(r,\nu)$ plane for $\kappa<0$. As in the case
$\kappa>0$, and for the same reason, the flow is symmetric with respect to the $r$ axis,
which it crosses perpendicularly at every non-fixed point with $r>0$. Since we can
have two, one, or no fixed points, we will treat each case separately.

\subsubsection{\bf The case of no fixed points} No fixed points occur when $g(r)$
has no zeroes. Since $g(r)\to\infty$ as $r\to 0$ with $r>0$, it follows that $g(r)>0$.
Since $g_1(r)$ and $g_2(r,\nu)$ are also positive, it follows that $G(r,\nu)>0$
for $r>0$. But $h(r,\nu)=G(r,\nu)/\nu$. Then $h(r,\nu)>0$ for $\nu>0$ and  $h(r,\nu)<0$ for $\nu<0$, so the flow
comes from infinity at time $-\infty$, crosses the $r$ axis perpendicularly upwards,
and symmetrically reaches infinity at time $+\infty$ (see Figure \ref{Fig2}(a)). These
are orbits as in the statements of Theorem \ref{homoneg} (vii) and (viii) but without any reference to the Lagrangian relative equilibrium $\bf s$.

\begin{figure}[htbp] 
   \centering
   \includegraphics[width=2in]{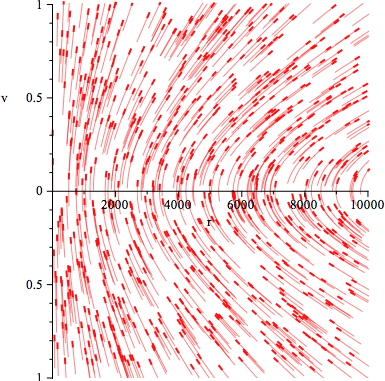}
   \includegraphics[width=2in]{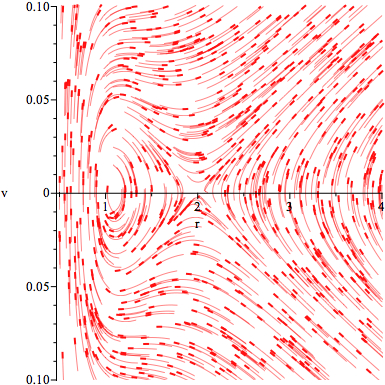}
   \caption{A sketch of the flow of system \eqref{lag2} for (a) 
   $\kappa = -2, c = 1/3,$ and  $m = 1/2$, typical for no fixed points;
   (b) $\kappa = -0.3, c = 0.23,$ and  $m = 0.12$, typical for two fixed points, which are in this case on the line $\nu=0$ at approximately $r_1=1.0882233$ and $r_2=2.0007055$.}
   \label{Fig2}
\end{figure}


\subsubsection{\bf The case of two fixed points} In this case, the function $g$ defined
at \eqref{g} has two distinct zeroes, one for $r=r_1$ and the other for $r=r_2$, with
$0<r_1<r_2$. In Theorem \ref{homoneg}, we denoted the fixed point $(r_2,0)$
by $\bf s$. Moreover, $g(r)>0$ for $r\in(0,r_1)\cup(r_2,\infty)$, and $g(r)<0$ for
$r\in(r_1,r_2)$. Therefore the vector field crosses the $r$ axis downwards between  $r_1$ and $r_2$, but upwards for $r<r_1$ as well as for $r>r_2$.

To determine the behavior of the flow near the fixed point $(r_1,0)$, we linearize system \eqref{lag2}. For this let $F(r,\nu)=\nu$ to be the right hand side of the first equation
in \eqref{lag2}, and notice that
${\partial F\over\partial r}(r_1,0)=1$, ${\partial F\over\partial\nu}(r_1,0)=0$,
and ${\partial G\over\partial\nu}(r_1,0)=0$. Since, along the $r$ axis, 
$G(r,0)$ is positive for $r<r_1$, but negative for $r_1<r<r_2$, 
it follows that either ${\partial G\over\partial r}(r_1,0)<0$ or ${\partial G\over\partial r}(r_1,0)=0$. But according to Lemma \ref{bigG}, if ${\partial G\over\partial r}(r_1,0)=0$, 
then ${\partial^2\over\partial r^2}G(r_*,0)>0$, so $G(r,0)$ is convex up at $(r_1,0)$. Then
$G(r,0)$ cannot not change sign when $r$ passes through $r_1$ along the
line $\nu=0$, so the only existing possibility is ${\partial G\over\partial r}(r_1,0)<0$.

The eigenvalues
of the linearized system corresponding to the fixed point $(r_1,0)$ are then
given by the equation
\begin{equation}
\det\begin{bmatrix}
-\lambda & 1\\
{\partial G\over\partial r}(r_1,0) & -\lambda
\end{bmatrix}
=0.\label{eigenv}
\end{equation}
Since ${\partial G\over\partial r}(r_1,0)$ is negative, the eigenvalues are purely imaginary, so $(r_1,0)$ is not a hyperbolic fixed point for equations \eqref{lag2}. Therefore this fixed point could be a spiral sink, a spiral source, or a center for the nonlinear system. But the symmetry of the flow of system \eqref{lag2} with respect to the $r$ axis, and the fact that, near $r_1$, the flow crosses the $r$ axis upwards to the left of $r_1$, and downwards to the right of $r_1$, eliminates the possibility of spiral behavior, so $(r_1,0)$ is a center (see Figure \ref{Fig2}(b)).

We can understand the generic behavior of the flow near the isolated
fixed point $(r_2,0)$ through linearization as well. For this purpose, notice that
${\partial F\over\partial r}(r_2,0)=1$, ${\partial F\over\partial\nu}(r_2,0)=0$,
and ${\partial G\over\partial\nu}(r_2,0)=0$. Since, along the $r$ axis, 
$G(r)$ is negative for $r_1<r<r_2$, but positive for $r>r_2$, 
it follows that  ${\partial G\over\partial r}(r_2,0)>0$ or
${\partial G\over\partial r}(r_2,0)=0$. But using Lemma \ref{bigG} the
same way we did above for the fixed point $(r_1,0)$, we can conclude
that the only possibility is ${\partial G\over\partial r}(r_2,0)>0$.

The eigenvalues corresponding to 
the fixed point $(r_2,0)$ are given by the equation
\begin{equation}
\det\begin{bmatrix}
-\lambda & 1\\
{\partial G\over\partial r}(r_2,0) & -\lambda
\end{bmatrix}
=0.\label{eigenv2}
\end{equation}
Consequently the fixed point $(r_2,0)$ is hyperbolic, its two eigenvalues are $\lambda_1>0$ and $\lambda_2<0$, so $(r_2,0)$ is a saddle. 

Indeed, for small $\nu>0$, the slope of the vector field decreases to $-\infty$ on 
lines $r=$ constant, with $r_1<r<r_2$, when $\nu$ tends to $0$. On the same 
lines, with $r>r_2$, the slope decreases from $+\infty$ as $\nu$ increases. This
behavior gives us an approximate idea of how the eigenvectors corresponding
to the eigenvalues $\lambda_1$ and $\lambda_2$ are positioned in the $r\nu$ plane.

On lines of the form $\nu=\eta r$, with $\eta>0$, the slope $h(r,\nu)$ of the
vector field becomes
$$h(r,\eta r)= \frac{1-\kappa r^2}{\eta r^3}\bigg[{c^2\over r}-{24m(1-\kappa r^2)\over{(12-9\kappa
r^2)^{3/2}}}\bigg] -{\kappa\eta
r^2\over{1-\kappa r^2}}.$$
So, as $r$ tends to $\infty$, the slope $h(r,\eta r)$ tends to $\eta$. Consequently 
the vector field doesn't bound the flow with negative slopes, and thus 
allows it to go to infinity.

With the fixed point $(r_1,0)$ as a center, the fixed point $(r_2,0)$ as a saddle, 
and a vector field that doesn't bound the orbits as $r\to\infty$, the flow must 
behave qualitatively as in Figure \ref{Fig2}(b).

This behavior of the flow proves the existence of the following types of solutions:

(a) periodic orbits around the fixed point $(r_1,0)$, corresponding to Theorem \ref{homoneg} (iii);

(b) a homoclinic orbit to the fixed point $(r_1,0)$, corresponding to Theorem \ref{homoneg}
(iv);

(c) an orbit that tends to the fixed point $(r_2,0)$, corresponding to Theorem \ref{homoneg}
(v);

(d) an orbit that is ejected from the fixed point $(r_2,0)$, corresponding to Theorem \ref{homoneg} (vi);

(e) orbits that come from infinity in the direction of the stable manifold of $(r_2,0)$ and
inside it, hit the $r$ axis to the right of $r_2$, and return symmetrically to infinity in the
direction of the unstable manifold of $(r_2,0)$; these orbits correspond to
Theorem \ref{homoneg} (vii);

(f) orbits that come from infinity in the direction of the stable manifold of $(r_2,0)$ and
outside it, turn around the homoclinic loop, and return symmetrically to infinity in the
direction of the unstable manifold of $(r_2,0)$; these orbits correspond to
Theorem \ref{homoneg} (viii).

Since no other orbits show up, the proof of this case is complete.

\subsubsection{\bf The case of one fixed point} We left the case of one fixed
point at the end because it is non-generic. It occurs when the two fixed points 
of the previous case overlap. Let us denote this fixed point by $(r_0,0)$. Then 
the function $g(r)$ is positive everywhere except at the fixed point, where 
it is zero. So near $r_0$, $g$ is decreasing for $r<r_0$ and increasing for $r>r_0$,
and the $r$ axis is tangent to the graph of $g$. Consequently, 
${\partial G\over\partial r}(r_0,0)=0$, and the eigenvalues obtained
from equation \eqref{eigenv} are $\lambda_1=\lambda_2=0$. In this degenerate
case, the orbits near the fixed point influence the asymptotic behavior of the flow
at $(r_0,0)$. Since the flow away from the fixed point looks very much like in
the case of no fixed points, the only difference between the flow sketched
in Figure \ref{Fig2}(a) and the current case is that at least an orbit ends at $(r_0,0)$, 
and at least another orbit one ejects from it. These orbits are described in Theorem \ref{homoneg} (iv) and (v).

\medskip

The proof of Theorem \ref{homoneg} is now complete.


\section{Mass equality of Lagrangian solutions}

In this section we show that all Lagrangian solutions that satisfy 
Definition \ref{deflag} must have equal masses. In other words,
we will prove the following result:

\begin{theorem}
In the curved $3$-body problem, the bodies of masses $m_1, m_2$, $m_3$ can
lead to a Lagrangian solution if and only if $m_1=m_2=m_3$.
\end{theorem}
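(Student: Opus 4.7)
The plan is to substitute the Lagrangian ansatz \eqref{lagsol} with general (possibly distinct) masses $m_1,m_2,m_3$ into the equations of motion \eqref{second} and extract a tangential-direction consistency condition that forces $m_1=m_2=m_3$. Exactly as in the proof of Theorem \ref{equal-masses}, because the triangle is equilateral and all three bodies share the same $r(t)$, $z(t)$, and the same $\dot r,\dot\omega,\dot z$, one has $\kappa\q_i\odot\q_j=1-3\kappa r^2/2=:\mu$ for all $i\ne j$, and $\kappa\dot\q_i\odot\dot\q_i$ takes a common value $D$ for the three bodies. Hence the denominator factor $K:=|\kappa|^{3/2}/[\sigma(1-\mu^2)]^{3/2}$ is the same in every term of the mutual force.

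Write $\vec P_i=(x_i,y_i)$ and $\vec P_i^\perp=(-y_i,x_i)$, and set $\theta_i=\omega+2(i-1)\pi/3$. The kinematic part of the ansatz gives
$$(\ddot x_i,\ddot y_i)=(a/r)\vec P_i+(b/r)\vec P_i^\perp,\qquad a:=\ddot r-r\dot\omega^2,\ \ b:=r\ddot\omega+2\dot r\dot\omega,$$
with $a,b$ common to $i=1,2,3$. On the other hand, the $xy$-components of \eqref{second} read
$$(\ddot x_i,\ddot y_i)=K\sum_{j\ne i}m_j(\vec P_j-\mu\vec P_i)-D\vec P_i.$$
Projecting onto $\vec P_i^\perp/r^2$ annihilates every term proportional to $\vec P_i$ (namely $D\vec P_i$, $\mu K(M-m_i)\vec P_i$, and the $(a/r)\vec P_i$ term on the kinematic side), and using $\vec P_j\cdot\vec P_i^\perp=r^2\sin(\theta_j-\theta_i)$ leaves the single scalar equation
$$K\sum_{j\ne i}m_j\sin(\theta_j-\theta_i)=b/r,\qquad i=1,2,3.$$

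Evaluating the sines at the relevant multiples of $\pm 2\pi/3$, the three equations become
$$\tfrac{\sqrt 3}{2}K(m_2-m_3)=\tfrac{\sqrt 3}{2}K(m_3-m_1)=\tfrac{\sqrt 3}{2}K(m_1-m_2)=b/r.$$
Summing the three left-hand sides gives $0$, so $3b/r=0$, hence $b=0$. Plugging $b=0$ back, each equation reduces to an equality of two masses, and the three together yield $m_1=m_2=m_3$. The converse is immediate from Theorem \ref{equal-masses}.

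I expect no serious obstacle: $r>0$ and $K>0$ hold throughout any genuine (noncollision, nonantipodal) Lagrangian motion, so the projection step is legitimate, and the only bookkeeping is the evaluation of $\sin(\pm 2\pi/3)$ and $\sin(\pm 4\pi/3)$. The radial projection and the $\ddot z_i$ equations provide independent consistency checks (in fact, if $z\ne 0$ the $\ddot z_i=\ddot z_j$ identity alone already yields $(m_i-m_j)z(1-\mu)=0$), but they are not needed once the tangential projection has done its work.
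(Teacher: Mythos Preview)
Your proof is correct and follows essentially the same approach as the paper: substitute the Lagrangian ansatz \eqref{lagsol} into \eqref{second} and read off the tangential (``$B$-type'') equations, which here acquire the extra terms $\tfrac{\sqrt3}{2}K(m_j-m_k)$; since these three differences sum to zero while each must equal the common value $b/r$, one gets $b=0$ and then $m_1=m_2=m_3$. The only cosmetic difference is that the paper also records the three radial equations (with $m_j+m_k$ in place of $2m$) before drawing the same conclusion, whereas you isolate the tangential projection directly.
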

\begin{proof} 
The fact that three bodies of equal masses can lead to Lagrangian solutions
for suitable initial conditions was proved in Theorem \ref{equal-masses}. So 
we will further prove that Lagrangian solutions can occur only if the masses are
equal. Since the case of relative equilibria was settled in \cite{Diacu1},
we need to consider only the Lagrangian orbits that are not relative equilibria. 
This means we can treat only the case when $r(t)$ is not constant.

Assume now that the masses are $m_1, m_2, m_3$, and substitute a
solution of the form
\begin{align*}
x_1&=r\cos\omega,& y_1&=r\sin\omega,& z_1&=(\sigma\kappa^{-1}-\sigma r^2)^{1/2},\\
x_2&=r\cos(\omega +2\pi/3),& y_2&=r\sin(\omega +2\pi/3),& z_2&=(\sigma\kappa^{-1}-\sigma r^2)^{1/2},\\
x_3&=r\cos(\omega +4\pi/3),& y_3&=r\sin(\omega +4\pi/3),& z_3&=(\sigma\kappa^{-1}-\sigma r^2)^{1/2},
\end{align*}
into the equations of motion. Computations and a reasoning similar to the ones 
performed in the proof of Theorem \ref{equal-masses} lead us to the system:
$$\ddot r-r(1-\kappa r^2)\dot\omega^2+{\kappa r\dot r^2\over{1-\kappa r^2}}+
{12(m_1+m_2)(1-\kappa r^2)\over{r^2(12-9\kappa r^2)^{3/2}}}=0,$$
$$\ddot r-r(1-\kappa r^2)\dot\omega^2+{\kappa r\dot r^2\over{1-\kappa r^2}}+
{12(m_2+m_3)(1-\kappa r^2)\over{r^2(12-9\kappa r^2)^{3/2}}}=0,$$
$$\ddot r-r(1-\kappa r^2)\dot\omega^2+{\kappa r\dot r^2\over{1-\kappa r^2}}+
{12(m_3+m_1)(1-\kappa r^2)\over{r^2(12-9\kappa r^2)^{3/2}}}=0,$$
$$r\ddot\omega+2\dot r\dot\omega-{4\sqrt{3}(m_1-m_2)\over r^2(12-9\kappa r^2)^{3/2}}=0,$$
$$r\ddot\omega+2\dot r\dot\omega-{4\sqrt{3}(m_2-m_3)\over r^2(12-9\kappa r^2)^{3/2}}=0,$$
$$r\ddot\omega+2\dot r\dot\omega-{4\sqrt{3}(m_3-m_1)\over r^2(12-9\kappa r^2)^{3/2}}=0,$$
which, obviously, can have solutions only if $m_1=m_2=m_3$. This conclusion completes
the proof.
\end{proof}


\section{Local existence and uniqueness of Eulerian solutions}

In this section we define the Eulerian solutions of the curved 3-body
problem and prove their local existence for suitable initial conditions in
the case of equal masses.

\begin{definition}
A solution of equations \eqref{second} is called Eulerian if, at every time instant,
the bodies are on a geodesic that contains the point $(0,0, |\kappa|^{-1/2}|)$.
\label{defeul}
\end{definition}

According to Definition \ref{defeul}, the size of an Eulerian solution may change,
but the particles are always on a (possibly rotating) geodesic. If the masses 
are equal, it is natural to assume that one body lies at the point 
$(0,0, |\kappa|^{-1/2}|)$, while the other two bodies find 
themselves at diametrically opposed points of a circle. Thus, in the case 
of equal masses, which we further consider, we ask that the moving bodies 
have the same coordinate $z$, which may vary in time.  

We can thus represent such an Eulerian solution of the curved 3-body problem in the form
\begin{equation}
{\bf q}
=({\bf q}_1,{\bf q}_2, {\bf q}_3), \ \ {\rm with}\ \ {\bf q}_i=(x_i,y_i,z_i),\  i=1,2,3,
\label{eulsolu}
\end{equation}
\begin{align*}
x_1&=0,& y_1&=0,& z_1&=(\sigma\kappa)^{-1/2},\\
x_2&=r\cos\omega,& y_2&=r\sin\omega,& z_2&=z,\\
x_3&=-r\cos\omega,& y_3&=-r\sin\omega,& z_3&=z,
\end{align*}
where $z=z(t)$ satisfies $z^2=\sigma\kappa^{-1}-\sigma r^2=(\sigma\kappa)^{-1}
(1-\kappa r^2)$; $\sigma$ is the signature function defined in \eqref{sigma}; $r:=r(t)$ is the {\it size function}; and $\omega:=\omega(t)$ is the {\it angular function}.  

Notice that, for every time $t$, we have $x_i^2(t)+y_i^2(t)+\sigma z_i^2(t)=\kappa^{-1},\ i=1,2,3$, which means that the bodies stay on the surface ${\bf M}_{\kappa}^2$. Equations
\eqref{eulsolu} also express the fact that the bodies are on the same (possibly rotating) geodesic. Therefore representation \eqref{eulsolu} of the Eulerian orbits agrees with Definition \ref{defeul} in the case of equal masses.

\begin{definition}
An Eulerian solution of equations \eqref{second} is called Eulerian homothetic if
the configuration expands or contracts, but does not rotate. 
\label{ellipticeulhomo}
\end{definition}

In terms of representation \eqref{eulsolu}, an Eulerian homothetic 
orbit for equal masses occurs when $\omega(t)$ is constant, but $r(t)$ is not constant. 
If, for instance, all three bodies
are initially in the same open hemisphere, while the two moving 
bodies have the same mass and the same $z$ coordinate, and are released
with zero initial velocities, then we are led to an Eulerian homothetic orbit
that ends in a triple collision.

\begin{definition}
An Eulerian solution of equations \eqref{second} is called an Eulerian relative 
equilibrium if the configuration of the system rotates without expanding or
contracting.
\end{definition}

In terms of representation \eqref{eulsolu}, an Eulerian relative
equilibrium orbit occurs when $r(t)$ is constant, but $\omega(t)$ is not constant. Of course, Eulerian homothetic solutions and elliptic Eulerian relative equilibria, whose existence we proved in \cite{Diacu1}, are particular Eulerian orbits, but we expect that the Eulerian orbits are not reduced to them. We  now show this fact by proving the local existence and uniqueness of Eulerian solutions that are neither Eulerian homothetic, nor Eulerian relative equilibria. 


\begin{theorem}
In the curved $3$-body problem of equal masses, for every set of initial conditions 
belonging to a certain class, the local existence and uniqueness of an Eulerian solution,
which is neither homothetic nor a relative equilibrium, is assured. \label{Eulequal-masses}
\end{theorem}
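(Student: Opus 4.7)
The plan is to follow the template of Theorem \ref{equal-masses} almost verbatim: substitute the Eulerian ansatz \eqref{eulsolu} into equations \eqref{second}, extract two scalar expressions $\mathcal{A}(t)$ and $\mathcal{B}(t)$ whose simultaneous vanishing is equivalent to the full vector system, convert $\mathcal{A}=\mathcal{B}=0$ into a first-order analytic system in $(r,w,\nu)$, and apply standard Picard--Lindel\"of theory.

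First I would record the inner products determined by \eqref{eulsolu}: using the constraint $\kappa\sigma z^2=1-\kappa r^2$, one finds $\kappa{\bf q}_1\odot{\bf q}_2=\kappa{\bf q}_1\odot{\bf q}_3=\sqrt{\sigma\kappa}\,z$ and $\kappa{\bf q}_2\odot{\bf q}_3=1-2\kappa r^2$. Because ${\bf q}_1$ is constant, $\dot{\bf q}_1=\ddot{\bf q}_1={\bf 0}$, and since the two interaction coefficients $\kappa{\bf q}_1\odot{\bf q}_j$ coincide for $j=2,3$, the equation of motion for body 1 collapses to ${\bf q}_2+{\bf q}_3-2\sqrt{\sigma\kappa}\,z\,{\bf q}_1={\bf 0}$; this is an identity along \eqref{eulsolu}, since both sides evaluate to $(0,0,2z)$. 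Thus body 1 imposes no constraint on $(r,\omega)$. By the symmetry ${\bf q}_3(t)=(-x_2(t),-y_2(t),z_2(t))$, the equation of motion for body 3 is equivalent to that for body 2 under $\omega\mapsto\omega+\pi$, so only body 2 must be analysed.

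Computing $\dot{\bf q}_2$ and $\ddot{\bf q}_2$ exactly as in the proof of Theorem \ref{equal-masses} and substituting into \eqref{second}, the $x$- and $y$-components, after grouping terms in $\cos\omega$ and $\sin\omega$, decouple into the linear pair
\begin{align*}
\mathcal{A}\cos\omega-\mathcal{B}\sin\omega &= 0,\\
\mathcal{A}\sin\omega+\mathcal{B}\cos\omega &= 0,
\end{align*}
where
\[
\mathcal{A}=\ddot r-r(1-\kappa r^2)\dot\omega^2+\frac{\kappa r\dot r^2}{1-\kappa r^2}+\Phi_\kappa(r),\qquad \mathcal{B}=r\ddot\omega+2\dot r\dot\omega,
\]
and $\Phi_\kappa(r)$ is the explicit analytic function collecting the two potential contributions (denominators $[\sigma(1-\sigma\kappa z^2)]^{3/2}$ for body 1 and $[\sigma(1-(1-2\kappa r^2)^2)]^{3/2}$ for body 3). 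The $z$-component, obtained after substituting $\dot z,\ddot z$ via differentiation of $\kappa\sigma z^2=1-\kappa r^2$, reduces to a consequence of $\mathcal{A}=0$ and imposes no new condition. Hence the full vector system is equivalent to $\mathcal{A}=\mathcal{B}=0$. Setting $w=\dot\omega$, $\nu=\dot r$, this becomes the first-order analytic system
\begin{equation*}
\begin{cases}
\dot r=\nu,\\
\dot w=-\dfrac{2\nu w}{r},\\
\dot\nu=r(1-\kappa r^2)w^2-\dfrac{\kappa r\nu^2}{1-\kappa r^2}-\Phi_\kappa(r),
\end{cases}
\end{equation*}
whose right-hand side is analytic on $\{r>0\}$ for $\kappa<0$ and on $\{0<r<\kappa^{-1/2}\}$ for $\kappa>0$. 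Standard existence and uniqueness then delivers a unique local solution $(r,w,\nu)$ for every initial datum in this domain, and imposing $w_0\ne 0$, $\nu_0\ne 0$ ensures the resulting Eulerian orbit is neither homothetic nor a relative equilibrium.

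The main obstacle will be the algebraic book-keeping: verifying that the equation for body 1 is a genuine identity along \eqref{eulsolu} (rather than a hidden constraint forcing $r\equiv\mathrm{const}$), and that the $z$-component of the body-2 equation is implied by $\mathcal{A}=0$ rather than yielding an independent condition. Because body 1 sits at a distinguished point while bodies 2 and 3 move, the Eulerian symmetry is weaker than the Lagrangian one, and these two compatibility checks are the only places where an unforeseen obstruction could appear; once they are settled, the passage to the ODE system and the invocation of standard theory are routine.
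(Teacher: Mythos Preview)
Your proposal is correct and follows essentially the same route as the paper: substitute the ansatz \eqref{eulsolu} into \eqref{second}, verify that the equations for body~1 are identities, reduce the equations for bodies~2 and~3 to $\mathcal{A}=\mathcal{B}=0$, and invoke Picard--Lindel\"of on the resulting first-order analytic system. The only difference is that the paper computes your $\Phi_\kappa(r)$ explicitly as $\dfrac{m(5-4\kappa r^2)}{4r^2(1-\kappa r^2)^{1/2}}$ and records that the $z$-equation yields exactly $\mathcal{A}=0$, whereas you leave $\Phi_\kappa$ abstract and note only that the $z$-equation is a consequence of $\mathcal{A}=0$; neither affects the argument, and your more explicit check that body~1's equation is an identity is a welcome addition.
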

\begin{proof}
To check whether equations \eqref{second} admit solutions of the form \eqref{eulsolu}
that start in the region $z>0$ and for which both $r(t)$ and $\omega(t)$ are not constant, we first compute that 
$$\kappa{\bf q}_1\odot{\bf q}_2=\kappa{\bf q}_1\odot{\bf q}_3=(1-\kappa r^2)^{1/2},$$
$$\kappa{\bf q}_2\odot{\bf q}_3=1-2\kappa r^2,$$
\begin{align*}
\dot x_1&=0,& \dot y_1&=0,\\
\dot x_2&=\dot r\cos\omega-r\dot\omega\sin\omega,& \dot y_2&=\dot r\sin\omega+
r\dot\omega\cos\omega,\\
\dot x_3&=-\dot r\cos\omega+r\dot\omega\sin\omega,& \dot y_2&=-\dot r\sin\omega-
r\dot\omega\cos\omega,\\
\dot z_1&=0,& \dot z_2=\dot z_3&=-{\sigma r\dot r\over (\sigma\kappa)^{1/2}(1-\kappa r^2)^{1/2}},
\end{align*}
$$\kappa\dot{\bf q}_1\odot\dot{\bf q}_1=0,$$
$$\kappa\dot{\bf q}_2\odot\dot{\bf q}_2=\kappa\dot{\bf q}_3\odot\dot{\bf q}_3=\kappa r^2\dot\omega^2+{\kappa\dot r^2\over 1-\kappa r^2},$$
$$\ddot x_1=\ddot y_1=\ddot z_1=0,$$
$$\ddot x_2=(\ddot r-r\dot\omega^2)\cos\omega-(r\ddot\omega+2\dot r\dot\omega)\sin\omega,$$
$$\ddot y_2=(\ddot r-r\dot\omega^2)\sin\omega+(r\ddot\omega+2\dot r\dot\omega)\cos\omega,$$
$$\ddot x_3=-(\ddot r-r\dot\omega^2)\cos\omega+(r\ddot\omega+2\dot r\dot\omega)\sin\omega,$$
$$\ddot y_3=-(\ddot r-r\dot\omega^2)\sin\omega-(r\ddot\omega+2\dot r\dot\omega)\cos\omega,$$
$$\ddot z_2=\ddot z_3=-\sigma r\ddot r(\sigma\kappa^{-1}-\sigma r^2)^{-1/2}-
\kappa^{-1}\dot r^2(\sigma\kappa^{-1}-\sigma r^2)^{-3/2}.$$
Substituting these expressions into equations \eqref{second}, we are led to the
system below, where the double-dot terms on the left indicate to which differential
equation each algebraic equation corresponds:
\begin{align*}
\ddot x_2, \ddot x_3:\ \ \ \ \ \ \ \ & C\cos\omega-D\sin\omega=0,\\
\ddot y_2, \ddot y_3:\ \ \ \ \ \ \ \ & C\sin\omega+D\cos\omega=0,\\
\ddot z_2, \ddot z_3:\ \ \ \ \ \ \ \ & C=0,
\end{align*}
where
$$C:=C(t)=\ddot r-r(1-\kappa r^2)\dot\omega^2+{\kappa r\dot r^2\over{1-\kappa r^2}}+
{m(5-4\kappa r^2)\over{4r^2(1-\kappa r^2)^{1/2}}},$$
$$D:=D(t)=r\ddot\omega+2\dot r\dot\omega.$$
(The equations corresponding to $\ddot x_1, \ddot y_1,$ and $\ddot z_1$ are identities, so
they don't show up). The above system has solutions if and only if $C=D=0$, which means that the existence of Eulerian homographic orbits of the curved $3$-body problem with equal masses is equivalent to the existence of solutions of the system of
differential equations:
\begin{equation}
\begin{cases}
\dot r=\nu\cr
\dot w=-{2\nu w\over r}\cr
\dot\nu=r(1-\kappa r^2)w^2-{\kappa r\nu^2\over{1-\kappa r^2}}-
{m(5-4\kappa r^2)\over{4r^2(1-\kappa r^2)^{1/2}}}, \cr
\end{cases}\label{eu0}
\end{equation}
with initial conditions $r(0)=r_0, w(0)=w_0, \nu(0)=\nu_0,$ where $w=\dot\omega$.
The functions $r,\omega$, and $w$ are analytic, and as long as the initial
conditions satisfy the conditions $r_0>0$ for all $\kappa$, as well as $r_0<\kappa^{-1/2}$
for $\kappa>0$, standard results of the theory of differential equations guarantee the
local existence and uniqueness of a solution $(r,w,\nu)$ of equations \eqref{eu0},
and therefore the local existence and uniqueness of an Eulerian orbit with
$r(t)$ and $\omega(t)$ not constant. This conclusion completes the proof. 
\end{proof}


\section{Classification of Eulerian solutions for $\kappa>0$}

We can now state and prove the following result:

\begin{theorem}
In the curved $3$-body problem with equal masses and $\kappa>0$ there are 
three classes of Eulerian solutions:

(i) homothetic orbits that begin or end in total collision in finite time;

(ii) relative equilibria, for which one mass is fixed at one pole of
the sphere while the other two move on a circle parallel with the $xy$ plane;

(iii) periodic homographic orbits that are not relative equilibria.

None of the above orbits can cross the equator, defined as the great circle orthogonal
to the $z$ axis.
\label{eulerp}
\end{theorem}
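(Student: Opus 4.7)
The plan mirrors the strategy of Theorems~\ref{equal-masses} and~\ref{homo}. First I would integrate the second equation of \eqref{eu0}: since $\dot w/w=-2\dot r/r$, there is a constant $c$ with $w=c/r^2$. The case $c=0$ forces $\dot\omega\equiv 0$, so the motion is homothetic and, for suitable initial conditions, ends or begins in triple collision in finite time, giving class~(i). For $c\ne 0$ the system \eqref{eu0} reduces to the planar flow
\begin{equation}
\dot r=\nu,\qquad \dot\nu=G(r,\nu):=g(r)-\frac{\kappa r\nu^2}{1-\kappa r^2},\label{Ereducedprop}
\end{equation}
with
$$g(r)=\frac{c^2(1-\kappa r^2)}{r^3}-\frac{m(5-4\kappa r^2)}{4r^2(1-\kappa r^2)^{1/2}},$$
defined on the band $(r,\nu)\in(0,\kappa^{-1/2})\times\mathbb{R}$. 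The boundary $r=0$ corresponds to triple collision, and $r=\kappa^{-1/2}$ corresponds to the singular antipodal configuration of bodies $2$ and $3$; in particular, unlike the Lagrangian case of Theorem~\ref{homo}, the line $r=\kappa^{-1/2}$ contains no fixed point.

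Next I would count the fixed points. They lie on $\nu=0$ with $g(r)=0$. Substituting $s=\sqrt{\kappa}\,r$ and $a=c^2\sqrt{\kappa}/m$, the equation $g(r)=0$ is equivalent to $\phi(s)=a$, where
$$\phi(s)=\frac{s(5-4s^2)}{4(1-s^2)^{3/2}}.$$
A short calculation gives $\phi'(s)=(5-2s^2)/[4(1-s^2)^{5/2}]>0$ on $(0,1)$, so $\phi$ is a strictly increasing bijection from $(0,1)$ onto $(0,\infty)$. Hence for every $c\ne 0$ there is a unique fixed point $(r_*,0)$ of \eqref{Ereducedprop}, with $r_*\in(0,\kappa^{-1/2})$; this is the Eulerian relative equilibrium of class~(ii). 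Moreover $g(r)>0$ on $(0,r_*)$, $g(r)<0$ on $(r_*,\kappa^{-1/2})$, and $g'(r_*)<0$.

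I would then read off the local behavior from the linearization. Since $\partial_\nu G(r,0)=0$ and $\partial_r G(r_*,0)=g'(r_*)<0$, the eigenvalues at $(r_*,0)$ are the purely imaginary $\pm i\sqrt{-g'(r_*)}$. The vector field \eqref{Ereducedprop} is invariant under the time-reversing involution $(r,\nu,t)\mapsto(r,-\nu,-t)$, which excludes spiral sinks and sources and forces $(r_*,0)$ to be a center. Together with the signs of $g$, this yields the expected local picture: the flow crosses the $r$-axis perpendicularly, upward for $r<r_*$ and downward for $r_*<r<\kappa^{-1/2}$.

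Finally I would confine the orbits. The centrifugal term of $g$ makes $G(r,\nu)\to+\infty$ as $r\to 0^+$ at any fixed $\nu$, so no orbit reaches $r=0$; and the argument of Subsection~4.1.1 shows that an orbit with $\nu\ne 0$ approaching $r=\kappa^{-1/2}$ would require the vector field to be tangent to that line, hence $\dot r\to 0$, contradicting $\dot r=\nu\ne 0$. This is the precise form of the equator-uncrossable statement for bodies $2$ and $3$. Every non-equilibrium orbit is therefore trapped in a compact subset of the band, and Poincar\'e--Bendixson combined with the reversibility---which forbids limit cycles---forces it to be periodic, giving class~(iii). The main technical obstacle I anticipate is the clean separation of center from weak focus at $(r_*,0)$; my preferred route is to restrict the total energy $H_\kappa=T_\kappa-U_\kappa$ to the Eulerian ansatz, obtaining a first integral of \eqref{Ereducedprop} whose level sets near $(r_*,0)$ are closed curves, thereby identifying $(r_*,0)$ as a center and exhibiting the global foliation by periodic orbits simultaneously.
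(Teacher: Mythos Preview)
Your proposal is correct and follows the same overall architecture as the paper: integrate $w=c/r^2$, dispose of $c=0$ as class~(i), reduce to the planar system on the strip $(0,\kappa^{-1/2})\times\mathbb{R}$, locate a unique fixed point, and use the symmetry $(r,\nu)\mapsto(r,-\nu)$ to conclude that all other orbits are periodic.

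The one place where you genuinely diverge from the paper is the fixed-point count. The paper reduces $g(r)=0$ to a cubic $q(x)$ in $x=r^2$, applies Descartes's rule, and then computes the resultant ${\rm Res}(q,q')$ to rule out double roots and hence three positive roots. Your substitution $s=\sqrt{\kappa}\,r$, $a=c^2\sqrt{\kappa}/m$, rewriting the equation as $\phi(s)=a$ with $\phi'(s)=(5-2s^2)/[4(1-s^2)^{5/2}]>0$, is shorter and more transparent; it also gives you $g'(r_*)<0$ for free, which you then feed into the linearization. The paper does not linearize at $(r_0,0)$ in this theorem at all; it argues directly from the limits of $W(r,\nu)$ at the two edges of the strip and the $\nu$-symmetry that every orbit entering $\nu>0$ left of $r_0$ must exit right of $r_0$ and hence close up. Your route via linearization, reversibility, and Poincar\'e--Bendixson (with reversibility ruling out limit cycles) is the more standard dynamical-systems packaging and makes the global statement cleaner; the paper's version is terser but relies on the reader filling in the confinement argument. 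Your suggested fallback of restricting $H_\kappa$ to the ansatz to get a first integral would also work and is arguably the most direct way to exhibit the closed level curves.
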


The rest of this section is dedicated to the proof of this theorem.

Let us start by noticing that the first two equations of system \eqref{eu0} 
imply that $\dot w=-{2\dot r w\over r}$, which leads to 
$$w=\frac{c}{r^2},$$
where $c$ is a constant. The case $c=0$ can occur only when $w=0$,
which means $\dot\omega=0$. Under these circumstances the angular
velocity is zero, so the motion is homothetic. The existence of these orbits 
is stated in Theorem \ref{eulerp} (i). They occur 
only when the angular momentum is zero, and lead to a triple collision
in the future or in the past, depending on the direction of the velocity vectors.
The existence of the orbits described in Theorem \ref{eulerp} (ii) was proved in \cite{Diacu1}.

For the rest of this section, we assume that $c\ne 0$. System \eqref{eu0} is 
thus reduced to
\begin{equation}
\begin{cases}
\dot r=\nu\cr
\dot\nu={c^2(1-\kappa r^2)\over r^3}-{\kappa r\nu^2\over{1-\kappa r^2}}-
{m(5-4\kappa r^2)\over{4r^2(1-\kappa r^2)^{1/2}}}. \cr
\end{cases}\label{eu}
\end{equation}
To address the existence of the orbits described in Theorem \ref{eulerp} (iii), 
and show that no other Eulerian orbits than those of Theorem \ref{eulerp}
exist for $\kappa>0$, we need to study the 
flow of system \eqref{eu} for $\kappa>0$. Let us first prove the following fact:

\begin{lemma}
Regardless of the values of the parameters $m, \kappa>0$, and $c\ne 0$, 
system \eqref{eu} has one fixed point $(r_0,0)$ with $0<r_0<\kappa^{-1/2}$.
\label{lemmaeup}
\end{lemma}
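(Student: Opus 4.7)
The plan is to mimic the strategy used for Lemma \ref{prima} in the $\kappa>0$ Lagrangian case: reduce the fixed-point condition to a single scalar equation $g(r)=0$ on the admissible interval $(0,\kappa^{-1/2})$, then show that $g$ is strictly monotone with opposite infinite limits at the endpoints, so that exactly one root exists.

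First I would set $\nu=0$ in the second equation of \eqref{eu}, which leaves
$$\frac{c^2(1-\kappa r^2)}{r^3}-\frac{m(5-4\kappa r^2)}{4r^2(1-\kappa r^2)^{1/2}}=0.$$
On $(0,\kappa^{-1/2})$ we have $1-\kappa r^2>0$, so this is equivalent, after multiplying by $r^2/(1-\kappa r^2)$, to $g(r)=0$, where
$$g(r):=\frac{c^2}{r}-\frac{m(5-4\kappa r^2)}{4(1-\kappa r^2)^{3/2}}.$$
Evaluating the limits, $g(r)\to+\infty$ as $r\to 0^+$ (the $c^2/r$ term dominates, using $c\neq0$), while $g(r)\to c^2\kappa^{1/2}-\infty=-\infty$ as $r\to(\kappa^{-1/2})^-$ (the second term blows up because $1-\kappa r^2\to 0^+$ and $5-4\kappa r^2\to 1>0$). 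The intermediate value theorem then provides at least one root $r_0\in(0,\kappa^{-1/2})$.

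For uniqueness, the key step is to prove $g'(r)<0$ throughout $(0,\kappa^{-1/2})$. A direct computation of the derivative of $(5-4\kappa r^2)(1-\kappa r^2)^{-3/2}$ using the quotient rule yields
$$\frac{d}{dr}\left[\frac{5-4\kappa r^2}{(1-\kappa r^2)^{3/2}}\right]=\frac{\kappa r(7-4\kappa r^2)}{(1-\kappa r^2)^{5/2}},$$
so that
$$g'(r)=-\frac{c^2}{r^2}-\frac{m\kappa r(7-4\kappa r^2)}{4(1-\kappa r^2)^{5/2}}.$$
On $(0,\kappa^{-1/2})$ we have $\kappa r^2<1$, hence $7-4\kappa r^2>3>0$; together with $\kappa,m,r>0$ this makes both terms of $g'(r)$ strictly negative. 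Thus $g$ is strictly decreasing, and combined with the boundary behaviour above, it has exactly one zero $r_0\in(0,\kappa^{-1/2})$, proving the lemma.

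The main obstacle is verifying the sign of $g'$: the derivative of the quotient is not manifestly signed, and one has to see that the combination of the $-8(1-\kappa r^2)$ and $+3(5-4\kappa r^2)$ contributions in the numerator collapses to $7-4\kappa r^2$, which is safely positive on the admissible interval. Everything else is routine limit analysis and the intermediate value theorem, exactly parallel to Lemma \ref{prima}.
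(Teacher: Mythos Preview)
Your argument is correct and, in fact, more direct than the paper's own proof. The paper proceeds by clearing denominators in $u(r)=0$ to obtain a cubic polynomial $q(x)$ in $x=r^2$, applies Descartes's rule of signs to conclude $q$ has one or three positive roots, and then computes the resultant $\mathrm{Res}(q,q')$ to rule out double roots; a connectedness argument over the parameter space $(\kappa,m,c)$ is then invoked to conclude the count is always one, with the endpoint limits of $u$ used only at the end to locate the root in $(0,\kappa^{-1/2})$. By instead dividing through by the positive factor $1-\kappa r^2$ and differentiating the resulting $g$, you get strict monotonicity from the elementary observation that $7-4\kappa r^2>3$ on the admissible interval, which simultaneously yields existence, uniqueness, and the location bound via the intermediate value theorem. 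Your route mirrors exactly what the paper itself does for Lemma~\ref{prima} in the Lagrangian case, and it avoids both the resultant computation and the (somewhat delicate) continuity-in-parameters step; the paper's polynomial approach, on the other hand, has the advantage that the same polynomial $q$ is reused verbatim in the $\kappa<0$ analysis of Lemma~\ref{last}.
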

\begin{proof} 
The fixed points of system \eqref{eu} are of the form $(r,0)$ for all values of $r$ 
that are zeroes of $u(r)$, where
\begin{equation}
u(r)={c^2(1-\kappa r^2)\over r}-
{m(5-4\kappa r^2)\over{4(1-\kappa r^2)^{1/2}}}.\label{uu}
\end{equation} 
But finding the zeroes of $u(r)$ is equivalent to obtaining the roots of the polynomial
$$
16\kappa^2(c^4\kappa + m^2)r^6 - 8\kappa(6c^4\kappa + 5m^2)r^4 +
(48c^4\kappa+25m^2)r^2 - 16c^4.
$$
Denoting $x=r^2$, this polynomial becomes
$$
q(x)=16\kappa^2(c^4\kappa + m^2)x^3 - 8\kappa(6c^4\kappa + 5m^2)x^2
+ (48c^4\kappa+25m^2)x - 16c^4.
$$
Since $\kappa>0$, Descarte's rule of signs implies that
$q$ can have one or three positive roots. The derivative of $q$ is the polynomial
\begin{equation}\label{derEuler-pol2}
q'(x)=48\kappa^2(c^4\kappa + m^2)x^2 - 16\kappa(6c^4\kappa + 5m^2)x
+ 48c^4\kappa+25m^2,
\end{equation}
whose discriminant is $64\kappa^2m^2(21c^4\kappa + 25m^2)$.
But, as $\kappa>0$, this discriminant is always positive, so it offers no 
additional information on the total number of positive roots. 

To determine the exact number of positive roots, we will use the resultant 
of two polynomials. Denoting by $a_i, i=1,2,\dots, \zeta,$ the roots of a polynomial $P$,
and by $b_j, j=1,2, \dots, \xi$, those of a polynomial $Q$, the resultant of $P$ and 
$Q$ is defined by the expression
$${\rm Res}(P,Q)=\prod _{i=1}^\zeta\prod_{j=1}^\xi(a_i-b_j).$$
Then $P$ and $Q$ have a common root if and only if 
${\rm Res}[P,Q]=0$. Consequently the resultant of $q$ and $q'$ 
is a polynomial in $\kappa, c$, and $m$ whose zeroes are the double roots of 
$q$. But
$$
{\rm Res}(q,q') = 1024c^4\kappa^5m^4(c^4\kappa + m^2)(108c^4\kappa +
125m^2).
$$
Then, for $m,\kappa >0$ and $c\ne 0$, ${\rm Res}[q,q']$ never cancels, 
therefore $q$ has exactly one positive root. Indeed, should $q$ have
three positive roots, a continuous variation of $\kappa, m$, and $c$, would lead
to some values of the parameters that correspond to a double root. Since
double roots are impossible, the existence of a unique equilibrium $(r_0,0)$ 
with $r_0>0$ is proved. To conclude that $r_0<\kappa^{-1/2}$ for all 
$m,\kappa>0$ and $c\ne 0$, it is enough to notice that
$\lim_{r\to 0}u(r)  = +\infty$ and $\lim_{r\to \kappa^{-1/2}}u(r)= - \infty.$
This conclusion completes the proof.
\end{proof}

\subsection{The flow in the $(r,\nu)$ plane for $\kappa>0$}

We can now study the flow of system \eqref{eu} in the $(r,\nu)$
plane for $\kappa>0$. The vector field is not defined along the 
lines $r=0$ and $r=\kappa^{-1/2}$, so it lies in the band 
$(0,\kappa^{-1/2})\times \mathbb{R}$. Consider now the slope ${d\nu\over dr}$
of the vector field. This slope is given by the ratio ${{\dot\nu}\over{\dot r}}=v(r,\nu)$, where
\begin{equation}
v(r,\nu)={c^2(1-\kappa r^2)\over \nu r^3}-{\kappa r\nu\over{1-\kappa r^2}}-
{m(5-4\kappa r^2)\over{4\nu r^2(1-\kappa r^2)^{1/2}}}.
\label{slope2}
\end{equation}
But $v$ is odd with respect to $\nu$, i.e.\ $v(r,-\nu)=-v(r,\nu)$, so the vector
field is symmetric with respect to the $r$ axis.  

\begin{figure}[htbp] 
   \centering
   \includegraphics[width=2in]{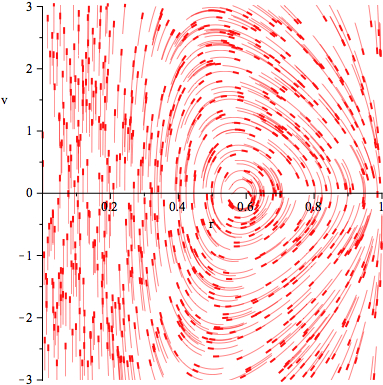}
   \caption{A sketch of the flow of system \eqref{eu} for 
   $\kappa = 1, c = 2,$ and  $m = 2$, typical for Eulerian
   solutions with $\kappa>0$.}
   \label{Fig3}
\end{figure}


Since $\lim_{r\to 0}v(r) = +\infty$ and $\lim_{r\to \kappa^{-1/2}}v(r) = - \infty$, 
the flow crosses the $r$ axis perpendicularly upwards to the left of $r_0$ and 
downwards to its right, where $(r_0,0)$ is the fixed point of the system \eqref{eu}
whose existence and uniqueness we proved in Lemma \ref{lemmaeup}. 
But the right hand side of the second equation in \eqref{eu} is of the
form 
\begin{equation}
W(r,\nu)=u(r)/r^2+g_2(r,\nu),\label{W}
\end{equation} 
where $g_2$ was defined earlier as 
$g_2(r,\nu)=-{{\kappa r\nu^2}
\over{1-\kappa r^2}}$, while $u(r)$ was defined in \eqref{uu}. 
Notice that
$$\lim_{r\to 0}W(r,\nu)=+\infty\ \ {\rm and}\ \ \lim_{r\to\kappa^{-1/2}}W(r,\nu)=-\infty.$$
Moreover, $W(r_0,0)=0$, and $W$ has no singularities for $r\in(0,\kappa^{-1/2})$.
Therefore the flow that enters the region $\nu>0$ to the left of $r_0$ must exit
it to the right of the fixed point. The symmetry with respect to the
$r$ axis forces all orbits to be periodic around $(r_0,0)$ (see Figure \ref{Fig3}). 
This proves the existence of the solutions described in Theorem \ref{eulerp} (iii),
and shows that no orbits other than those in Theorem \ref{eulerp} occur
for $\kappa>0$. The proof of Theorem \ref{eulerp} is now complete.


\section{Classification of Eulerian solutions for $\kappa<0$}

We can now state and prove the following result:

\begin{theorem}
In the curved $3$-body problem with equal masses and $\kappa>0$ there are 
four classes of Eulerian solutions:

(i) Eulerian homothetic orbits that begin or end in total collision in finite time;

(ii) Eulerian relative equilibria, for which one mass is fixed at the vertex of the hyperboloid while the other two move on a circle parallel with the $xy$ plane; 

(iii) Eulerian periodic orbits that are not relative equilibria; the line connecting the
two moving bodies is always parallel with the $xy$ plane, but their $z$ coordinate
changes in time;

(iv) Eulerian orbits that come from infinity at time $-\infty$, reach a position
when the size of the configuration is minimal, and then return to infinity at time
$+\infty$.

\label{eulneg}
\end{theorem}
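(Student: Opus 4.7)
The plan is to adapt the analysis of Theorem~\ref{eulerp} to the hyperbolic setting. As in the positive-curvature case, the first two equations of~\eqref{eu0} give $w=c/r^2$ with $c$ an integration constant. Class~(i) corresponds to $c=0$ and yields Eulerian homothetic motion ending in a triple collision; class~(ii) is the existence statement already established in~\cite{Diacu1}. I therefore assume $c\neq 0$ throughout and work with the reduced planar system~\eqref{eu}, whose phase space is now the half-plane $(0,\infty)\times\mathbb{R}$, since for $\kappa<0$ the factor $1-\kappa r^2$ is positive for every $r>0$.

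The first step is a fixed-point count. Fixed points lie on the $r$-axis at positive zeros of the function $u(r)$ in~\eqref{uu}, equivalently at positive roots of the polynomial $q(x)$ of Lemma~\ref{lemmaeup}. For $\kappa<0$ one has $\lim_{r\to 0^+}u(r)=+\infty$ and $u(r)\sim |\kappa|^{1/2}(c^2|\kappa|^{1/2}-m)\,r$ as $r\to\infty$. Splitting into cases according to the signs of $c^4\kappa+m^2$, $6c^4\kappa+5m^2$, and $48c^4\kappa+25m^2$, and applying Descartes' rule of signs, I would prove that $q$ has exactly one positive root when $c^4\kappa+m^2>0$ and none when $c^4\kappa+m^2\le 0$. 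In the first case I denote the unique fixed point by $(r_0,0)$; this is precisely the Eulerian relative equilibrium of~(ii).

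A linearization then shows that $(r_0,0)$ is a center. Since $u(r_0)=0$, the Jacobian of~\eqref{eu} at $(r_0,0)$ reduces to $\left(\begin{smallmatrix}0&1\\u'(r_0)/r_0^2&0\end{smallmatrix}\right)$. As $u$ has exactly one zero on $(0,\infty)$ and $u\to+\infty$ at $r=0$, we have $u'(r_0)<0$, so the eigenvalues are purely imaginary. The symmetry $\nu\mapsto -\nu$ of~\eqref{eu} forces perpendicular crossing of the $r$-axis at every non-fixed point and rules out spiral behaviour, exactly as in the center argument used in Theorem~\ref{homoneg}; this identifies $(r_0,0)$ as a genuine center, hence surrounded by a one-parameter family of periodic orbits yielding class~(iii).

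To finish, I would use the energy integral inherited from the Hamiltonian, which on the reduced system takes the form $E(r,\nu)=m\nu^2/(1-\kappa r^2)+V_{\mathrm{eff}}(r)$, with $V_{\mathrm{eff}}(r)=mc^2/r^2-U(r)$ and $U$ the Eulerian force function. Using the expressions for $\kappa{\bf q}_i\odot{\bf q}_j$ computed in the proof of Theorem~\ref{Eulequal-masses}, direct calculation yields $V_{\mathrm{eff}}(r)\to+\infty$ as $r\to 0^+$ and $V_{\mathrm{eff}}(r)\to -3m^2|\kappa|^{1/2}$ as $r\to\infty$; moreover, the critical points of $V_{\mathrm{eff}}$ on $(0,\infty)$ coincide with the zeros of $u$. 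Hence in the one-fixed-point regime $V_{\mathrm{eff}}$ has a unique minimum at $r_0$, and in the no-fixed-point regime it is strictly decreasing. Classifying the level sets of $E$ then closes the argument: in the one-fixed-point case, compact level sets with $V_{\mathrm{eff}}(r_0)<E_0<-3m^2|\kappa|^{1/2}$ give the periodic orbits of~(iii), and unbounded, $r$-axis-symmetric level sets with $E_0>-3m^2|\kappa|^{1/2}$ give the escape orbits of~(iv); in the no-fixed-point case only orbits of type~(iv) occur. The step I expect to be most delicate is verifying that $V_{\mathrm{eff}}$ has exactly the shape described --- in particular, that in the one-fixed-point regime $V_{\mathrm{eff}}(r_0)<-3m^2|\kappa|^{1/2}$, so that the periodic region is nonempty and cleanly separated from the escape region, with no extra invariant behaviour (such as a homoclinic loop from a second equilibrium) arising. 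I would handle this by combining the Descartes count of zeros of $u$ with a direct estimate of $V_{\mathrm{eff}}$ at $r_0$ against its asymptotic limit.
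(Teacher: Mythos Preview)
Your proposal is correct, and the fixed-point count via Descartes' rule is exactly what the paper does (it is Lemma~\ref{last}). Where you diverge from the paper is in the global analysis. The paper never invokes the energy integral; instead it argues directly with the vector field $W(r,\nu)$ of~\eqref{W}: in the no-fixed-point regime $W>0$ everywhere, forcing the monotone escape picture; in the one-fixed-point regime it first proves a separate technical lemma (Lemma~\ref{lllast}) showing $\partial W/\partial r(r_0,0)\ne 0$, then linearizes, and finally analyses the nullcline $\dot\nu=0$, proving it is trapped between two horizontal lines $\nu=\pm[m(-\kappa)^{1/2}+\kappa c^2]^{1/2}$ to separate the periodic region from the escape region.

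Your route through the conserved quantity $E(r,\nu)=m\nu^2/(1-\kappa r^2)+V_{\mathrm{eff}}(r)$ is genuinely different and, in this setting, cleaner. Because $V'_{\mathrm{eff}}(r)=-2m\,u(r)/[r^2(1-\kappa r^2)]$ and $1-\kappa r^2>0$, the critical points of $V_{\mathrm{eff}}$ are exactly the zeros of $u$, and the sign change of $u$ (from positive near $0$ to negative at infinity, with a single zero) immediately forces $r_0$ to be a strict minimum of $V_{\mathrm{eff}}$. This makes your worry about $u'(r_0)<0$ moot: the level-set argument needs only the strict minimum, not a nondegenerate one, so you bypass the paper's Lemma~\ref{lllast} entirely. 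Likewise, the ``delicate step'' you flag is easier than you fear: once you know $V_{\mathrm{eff}}$ is strictly increasing on $(r_0,\infty)$ to the limit $L=-3m^2|\kappa|^{1/2}$, the inequality $V_{\mathrm{eff}}(r_0)<L$ is automatic, and the absence of a second critical point rules out any homoclinic structure. What the paper's nullcline approach buys, by contrast, is a more explicit quantitative description of the boundary between the periodic and escape regions in the $(r,\nu)$ plane.
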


The rest of this section is dedicated to the proof of this theorem.

The homothetic orbits of the type stated in Theorem \ref{eulneg} (i) occur
only when $c=0$. Then the two moving bodies collide simultaneously with
the fixed one in the future or in the past. Depending on the initial
conditions, the motion can be bounded or unbounded.

The existence of the orbits stated in Theorem \ref{eulneg} (ii) was proved in
\cite{Diacu1}. To prove the existence of the solutions stated in Theorem \ref{eulneg} (iii)
and (iv), and show that there are no other kinds of orbits, we start with the following
result:

\begin{lemma}
In the curved three body problem with $\kappa<0$, the polynomial $q$ 
defined in the proof of Lemma \ref{lemmaeup} has no positive roots for 
$c^4\kappa + m^2 \leq 0$, but has exactly one positive root for 
$c^4\kappa + m^2 > 0$.
\label{last}
\end{lemma}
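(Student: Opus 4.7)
The polynomial under study is
$$q(x)=16\kappa^2(c^4\kappa+m^2)x^3-8\kappa(6c^4\kappa+5m^2)x^2+(48c^4\kappa+25m^2)x-16c^4,$$
and my plan is to count its positive roots via Descartes's rule of signs, after the substitution $\alpha=-\kappa>0$, which rewrites $q$ as
$$q(x)=16\alpha^2(m^2-c^4\alpha)x^3+8\alpha(5m^2-6c^4\alpha)x^2+(25m^2-48c^4\alpha)x-16c^4.$$
Under this substitution, the hypothesis $c^4\kappa+m^2\le 0$ becomes $m^2\le c^4\alpha$ and the opposite hypothesis becomes $m^2>c^4\alpha$.

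For the first range, $m^2\le c^4\alpha$, I would observe that $5m^2\le 5c^4\alpha<6c^4\alpha$ and $25m^2\le 25c^4\alpha<48c^4\alpha$, so every coefficient of $q$ is non-positive with constant term $-16c^4$ strictly negative (recall $c\ne 0$). The coefficient sequence therefore has no sign change, and Descartes's rule delivers zero positive roots.

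For $m^2>c^4\alpha$, the leading coefficient is strictly positive and $q(0)=-16c^4<0$, so the intermediate value theorem immediately guarantees at least one positive root. The main obstacle is ruling out the possibility of three positive roots. My plan is to split the range $m^2>c^4\alpha$ at the two thresholds $(6/5)c^4\alpha$ and $(48/25)c^4\alpha$, which control the signs of the $x^2$- and $x$-coefficients; the numerical fact $6/5<48/25$ is crucial here. The three subranges
$$c^4\alpha<m^2\le(6/5)c^4\alpha,\quad (6/5)c^4\alpha<m^2\le(48/25)c^4\alpha,\quad m^2>(48/25)c^4\alpha$$
produce coefficient sign patterns $+,-,-,-$; $+,+,-,-$; and $+,+,+,-$ respectively (a vanishing middle coefficient at a boundary is skipped by Descartes). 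Each pattern exhibits \emph{exactly} one sign change, so $q$ has exactly one positive root throughout the regime $c^4\kappa+m^2>0$.

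As a safety net, the resultant formula $\mathrm{Res}(q,q')=1024c^4\kappa^5m^4(c^4\kappa+m^2)(108c^4\kappa+125m^2)$ recorded in the proof of Lemma~\ref{lemmaeup} could replace the case analysis: for $\kappa<0$ and $c^4\kappa+m^2>0$, the inequality $125m^2>-125c^4\kappa>-108c^4\kappa$ gives $108c^4\kappa+125m^2>0$, so $\mathrm{Res}(q,q')\ne 0$. Together with $q(0)=-16c^4\ne 0$, this shows that the number of positive roots is locally constant on the open parameter region $\{\kappa<0,\ m>0,\ c\ne 0,\ c^4\kappa+m^2>0\}$, and a single worked example (e.g.\ $\kappa=-1$, $c=1$, $m=\sqrt{2}$, where $q(x)=16x^3+32x^2+2x-16$ has a unique positive root) would pin the count at one.
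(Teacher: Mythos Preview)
Your proof is correct and follows essentially the same route as the paper: both argue via Descartes's rule of signs, splitting into cases according to the signs of $c^4\kappa+m^2$, $6c^4\kappa+5m^2$, and $48c^4\kappa+25m^2$. Your substitution $\alpha=-\kappa$ and your handling of the boundary cases where a middle coefficient vanishes are tidier than the paper's presentation, and the resultant-based ``safety net'' is a nice alternative that the paper does not give.
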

\begin{proof}
We split our analysis in three different cases depending on the sign of $c^4\kappa + m^2$:

(1) $c^4\kappa + m^2=0$.
In this case $q$ has form
$8\kappa m^2 x^2 + 23c^4\kappa x -16c^4,$ a polynomial that does not have any
positive root.

(2) $c^4\kappa + m^2<0.$
Writing $6c^4\kappa + 5m^2 = 6(c^4\kappa + m^2) - m^2$, we see that
the term of $q$ corresponding to $x^2$ is always negative, so by Descartes's 
rule of signs the number of positive roots depends on the sign of the coefficient
corresponding to $x$, i.e.\ $48c^4\kappa+25m^2=48(c^4\kappa+m^2)-23m^2$, which
is also negative, and therefore $q$ has no positive root.

(3) $c^4\kappa + m^2>0$.
This case leads to three subcases:

-- if $6c^4\kappa + 5m^2<0$, then necessarily $48c^4\kappa+25m^2<0$
and, so $q$ has exactly one positive root;

-- if $6c^4\kappa + 5m^2>0$ and $48c^4\kappa+25m^2<0$, then $q$ has 
one change of sign and therefore exactly one positive root;

-- if $48c^4\kappa+25m^2>0,$ then all coefficients, except for the
free term, are positive, therefore $q$ has exactly one positive root. 

These conclusions complete the proof. \end{proof}

The following result will be used towards understanding the case when 
system \eqref{eu} has one fixed point.

\begin{lemma}
Regardless of the values of the parameters $\kappa<0, m>0$, and 
$c\ne 0$, there is no fixed point, $(r_*,0)$, of system \eqref{eu} for which
${\partial\over\partial r}W(r_*,0)=0$, where $W$ is defined in \eqref{W}.
\label{lllast}
\end{lemma}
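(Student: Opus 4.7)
The plan is to exploit the fact that $g_2(r,\nu)=-\kappa r\nu^2/(1-\kappa r^2)$ enters $W$ only through the quadratic factor $\nu^2$, so $\partial g_2/\partial r(r,0)=0$ identically. Since $W(r,\nu)=u(r)/r^2+g_2(r,\nu)$, differentiation and evaluation at $\nu=0$ give
$$
\frac{\partial W}{\partial r}(r_*,0)=\frac{u'(r_*)}{r_*^2}-\frac{2u(r_*)}{r_*^3}.
$$
At any fixed point $(r_*,0)$ of \eqref{eu} we have $W(r_*,0)=0$, i.e.\ $u(r_*)=0$, so the second term drops out and this collapses to $\partial W/\partial r(r_*,0)=u'(r_*)/r_*^2$. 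Thus the lemma is equivalent to the assertion that $u'(r_*)\ne 0$ at every positive zero $r_*$ of $u$.

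Next I would differentiate $u$ explicitly from its definition in \eqref{uu}, arriving at
$$
u'(r)=-\frac{c^2}{r^2}-c^2\kappa-\frac{m\kappa r(4\kappa r^2-3)}{4(1-\kappa r^2)^{3/2}}.
$$
The appearance of $m$ and of the fractional-power denominator must be removed by using the fixed-point identity itself: $u(r_*)=0$ is equivalent to $4c^2(1-\kappa r_*^2)^{3/2}=mr_*(5-4\kappa r_*^2)$, which rearranges to
$$
\frac{mr_*}{(1-\kappa r_*^2)^{3/2}}=\frac{4c^2}{5-4\kappa r_*^2}.
$$
Substituting this into the expression for $u'(r_*)$ eliminates both the mass and the radical simultaneously.

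After routine algebra I expect the expression to collapse to
$$
u'(r_*)=-\frac{c^2(5-2\kappa r_*^2)}{r_*^2(5-4\kappa r_*^2)}.
$$
For $\kappa<0$ and $r_*>0$ the factors $5-2\kappa r_*^2$, $5-4\kappa r_*^2$, and $r_*^2$ are all strictly positive, while $c^2>0$ because $c\ne 0$. Consequently $u'(r_*)<0$, and therefore $\partial W/\partial r(r_*,0)<0$, which proves the lemma in the sharper form that the derivative is actually of definite sign.

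The main obstacle I anticipate is purely algebraic: the raw form of $u'$ mixes rational terms in $r$ with a $(1-\kappa r^2)^{3/2}$ denominator, and one must feed in the fixed-point identity in exactly the right way to kill both $m$ and the radical in a single substitution, rather than being left with a messy mixed expression. An alternative route, via the polynomial $q$ from Lemma \ref{lemmaeup} and the resultant $\mathrm{Res}(q,q')=1024c^4\kappa^5m^4(c^4\kappa+m^2)(108c^4\kappa+125m^2)$, is available in principle but less decisive, since for $\kappa<0$ the second and fourth factors can vanish and one would still need to check case by case whether the corresponding double roots of $q$ are positive; the direct computation above bypasses this bookkeeping.
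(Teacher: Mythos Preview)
Your proof is correct and follows essentially the same route as the paper: both reduce the claim to $u'(r_*)\ne 0$ via $\partial g_2/\partial r(r,0)=0$, differentiate $u$ explicitly, and eliminate $m$ and the radical using the fixed-point identity $u(r_*)=0$. The only difference is cosmetic: the paper sets $u'(r_*)=0$ and derives the contradiction $r_*^2=5/(2\kappa)<0$, whereas you compute $u'(r_*)=-c^2(5-2\kappa r_*^2)/[r_*^2(5-4\kappa r_*^2)]$ directly and read off its sign, which is slightly sharper and in fact exactly what the paper needs later when it argues that $\partial W/\partial r(r_0,0)<0$.
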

\begin{proof}
Since $u(r_*)=0, {\partial\over \partial r}g_2(r_*,0)=0$, and
$${\partial W\over \partial r}(r,\nu)=-(2/r^3)u(r)+(1/r^2){d\over dr}u(r)+{\partial\over \partial r}
g_2(r,\nu),$$
it means that $W(r_*,0)=0$ if and only if ${d\over dr}u(r_*)=0$. Consequently our result would follow
if we can prove that there is no fixed point $(r_*,0)$ for which ${d\over dr}u(r_*)=0$.
To show this fact, notice first that
\begin{equation}
{d\over dr}u(r)=-{{c^2(1+\kappa r^2)}\over{r^2}}-{{\kappa mr(4\kappa r^2-3)}
\over{4(1-\kappa r^2)^{3/2}}}.
\label{derivu}
\end{equation}
From the definition of $u(r)$ in \eqref{uu}, the identity $u(r_*)=0$ is equivalent to
$$(1-\kappa r_*^2)^{1/2}={mr_*(5-4\kappa r_*^2)\over 4c^2(1-\kappa r_*^2)}.$$
Regarding $(1-\kappa r^2)^{3/2}$ as $(1-\kappa r^2)^{1/2}(1-\kappa r^2)$,
and substituting the above expression of $(1-\kappa r_*^2)^{1/2}$ into
\eqref{derivu} for $r=r_*$, we obtain that
$${\kappa(4\kappa r_*^2-3)\over 5-4\kappa r_*^2}+{{1+\kappa r_*^2}\over r_*^2}=0,$$
which leads to the conclusion that $r_*^2=5/2\kappa<0$. Therefore there is no
fixed point $(r_*,0)$ such that ${d\over dr}u(r_*)=0$. This conclusion completes the proof.
\end{proof}

\subsection{The flow in the $(r,\nu)$ plane for $\kappa<0$}
To study the flow of system \eqref{eu} for $\kappa<0$, we will consider
the two cases given by Lemma \ref{last}, namely when system \eqref{eu}
has no fixed points and when it has exactly one fixed point.

\subsubsection{\bf The case of no fixed points} Since $\kappa<0$, and 
system \eqref{eu} has no fixed points, the function $u(r)$, defined in \eqref{uu}, 
has no zeroes. But $\lim_{r\to 0}u(r)=+\infty$, so $u(r)>0$ for all $r>0$. Then 
$u(r)/r^2>0$ for all $r>0$. Since $\lim_{r\to 0}g_2(r,\nu)=0$,
it follows that $\lim_{r\to 0}W(r,\nu)=+\infty$, where $W(r,\nu)$ (defined in \eqref{W}) 
forms the right hand side of the second equation in system \eqref{eu}. Since system
\eqref{eu} has no fixed points, $W$ doesn't vanish. Therefore $W(r,\nu)>0$ for all 
$r>0$ and $\nu$.

Notice that the slope of the vector field, $v(r,\nu)$, defined in \eqref{slope2}, is
of the form $v(r,\nu)=W(r,\nu)/\nu$, which implies that the flow crosses the
$r$ axis perpendicularly at every point with $r>0$. Also, for $r$ fixed, 
$\lim_{\nu\to\pm\infty}W(r,\nu)=+\infty$.
Moreover, for $\nu$ fixed, $\lim_{r\to\infty}W(r,\nu)=0$. This means that
the flow has a simple behavior as in Figure \ref{Fig4}(a). These orbits correspond
to those stated in Theorem \ref{eulneg} (iv).

\begin{figure}[htbp] 
   \centering
   \includegraphics[width=2in]{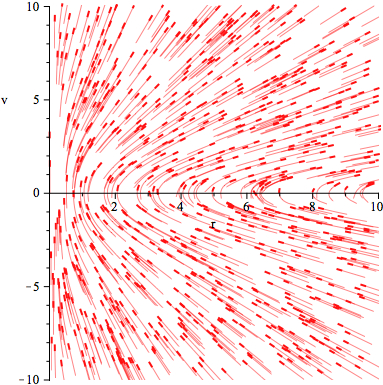}
   \includegraphics[width=2in]{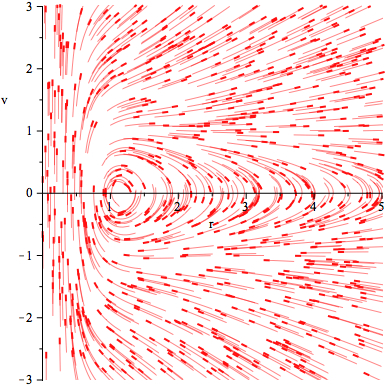}
   \caption{A sketch of the flow of system \eqref{eu} for (a) 
   $\kappa = -2, c = 2,$ and  $m = 4$, typical for no fixed points;
   (b) $\kappa = -2, c = 2,$ and  $m = 6.2$, typical for one fixed point.}
   \label{Fig4}
\end{figure}


\subsubsection{\bf The case of one fixed point}
We start with analyzing the behavior of the flow near the unique
fixed point $(r_0,0)$. Let $F(r,\nu)=\nu$ denote the right hand side in the
first equation of system \eqref{eu}. Then ${\partial\over\partial r}F(r_0,0)=0,
{\partial\over\partial\nu}F(r_0,0)=1$, and ${\partial\over\partial\nu}W(r_0,0)=0$.
To determine the sign of ${\partial\over\partial r}W(r_0,0)$, notice first that
$\lim_{r\to 0}W(r,\nu)=+\infty$. Since the equation $W(r,\nu)=0$ has a single
root of the form $(r_0,0)$, with $r_0>0$, it follows that  $W(r,0)>0$ for
$0<r<r_0$.

To show that $W(r,0)<0$ for $r>r_0$, assume the contrary, which (given
the fact that $r_0$ is the only zero of $W(r,0)$) means that $W(r,0)>0$ for
$r>r_0$. So $W(r,0)\ge 0$, with equality only for $r=r_0$. But recall that 
we are in the case when the parameters satisfy the inequality 
$c^4\kappa + m^2 > 0$. Then a slight variation of the parameters 
$\kappa<0, m>0$, and $c\ne 0$, within the region defined by the above inequality,
leads to two zeroes for $W(r,0)$, a fact which contradicts Lemma \ref{last}. Therefore, necessarily, $W(r,0)<0$ for $r>r_0$. 

Consequently $W(r,0)$ is decreasing in a small neighborhood of $r_0$, so ${\partial\over\partial r}W(r_0,0)\le 0$. But by Lemma \ref{lllast}, ${\partial\over\partial r}W(r_0,0)\ne 0$,
so necessarily ${\partial\over\partial r}W(r_0,0)< 0$. The eigenvalues 
corresponding to the system obtained by linearizing equations \eqref{eu} around the fixed point $(r_0,0)$ are given by the equation
\begin{equation}
\det\begin{bmatrix}
-\lambda & 1\\
{\partial W\over\partial r}(r_0,0) & -\lambda
\end{bmatrix}
=0,\label{eigenvalues}
\end{equation}
so these eigenvalues are purely imaginary. In terms of system \eqref{eu}, this means
that the fixed point $(r_0,0)$ could be a spiral sink, a spiral source, or a center. The
symmetry of the flow with respect to the $r$ axis excludes the first two possibilities,
consequently $(r_0,0)$ is a center (see Figure \ref{Fig4}(b)). We thus proved that, in a neighborhood of this fixed point, there exist infinitely many periodic Eulerian solutions whose existence was stated in Theorem \ref{eulneg} (iii). 

To complete the analysis of the flow of system \eqref{eu}, we will use the
nullcline $\dot\nu=0$, which is given by the equation
\begin{equation}
\nu^2={{1-\kappa r^2}\over \kappa r} \bigg[{c^2(1-\kappa r^2)\over r^3}-{m(5-4\kappa^2)\over4r^2(1-\kappa r^2)^{1/2}}\bigg].
\label{curve}
\end{equation}
Along this curve, which passes through the fixed point $(r_0,0)$, and is symmetric with 
respect to the $r$ axis, the vector field has slope zero. To understand the qualitative
behavior of this curve, notice that 
$$\lim_{r\to\infty} {{1-\kappa r^2}\over \kappa r} \bigg[{c^2(1-\kappa r^2)\over r^3}-{m(5-4\kappa^2)\over4r^2(1-\kappa r^2)^{1/2}}\bigg]=m(-\kappa)^{1/2}+\kappa c^2.$$
But we are restricted to the parameter region given by the inequality $m^2+\kappa c^4>0$,
which is equivalent to  
$$[m(-\kappa)^{1/2}-(-\kappa)c^2][m(-\kappa)^{1/2}+(-\kappa)c^2]>0.$$
Since the second factor of this product is positive, it follows that the first factor
must be positive, therefore the above limit is positive. Consequently the curve
given in \eqref{curve} is bounded by the horizontal lines 
$$\nu=[m(-\kappa)^{1/2}+\kappa c^2]^{1/2}\ \ {\rm and}\ \ \nu=-[m(-\kappa)^{1/2}+\kappa c^2]^{1/2}.$$
Inside the curve, the vector field has negative slope for $\nu>0$ and positive slope for $\nu<0$. Outside the curve, the vector field has positive slope for $\nu>0$, but negative
slope for $\nu<0$. So the orbits of the flow that stay outside the nullcline curve are
unbounded. They correspond to solutions whose existence was stated
in Theorem \ref{eulneg} (iv). This conclusion completes the proof of Theorem \ref{eulneg}.


\section{Hyperbolic homographic solutions}

In this last section we consider a certain class of homographic orbits, which occur only
in spaces of negative curvature. In the case $\kappa=-1$, we proved in \cite{Diacu1}
the existence of hyperbolic Eulerian relative equilibria of the curved 3-body problem
with equal masses. These orbits behave as follows: three bodies of equal masses move along three fixed hyperbolas, each body on one of them; the middle hyperbola, which is a geodesic passing through the vertex of the hyperboloid, lies in a plane of ${\mathbb R}^3$  that is parallel and equidistant from the planes containing the other two hyperbolas,
none of which is a geodesic. At every moment in time, the bodies lie equidistantly from 
each other on a geodesic hyperbola that rotates hyperbolically. These solutions are the hyperbolic counterpart of Eulerian solutions, in the sense that the bodies stay on the same geodesic, which rotates hyperbolically, instead of circularly. The existence proof we gave in \cite{Diacu1} works for any $\kappa<0$. We therefore provide the following definitions.

\begin{definition}
A solution of the curved $3$-body problem is called hyperbolic homographic
if the bodies maintain a configuration similar to itself while rotating hyperbolically.
When the bodies remain on the same hyperbolically rotating geodesic,
the solution is called hyperbolic Eulerian. 
\label{defhyp}
\end{definition}

While there is, so far, no evidence of hyperbolic non-Eulerian
homographic solutions, we showed in \cite{Diacu1} that hyperbolic Eulerian
orbits exist in the case of equal masses. In the particular case of equal masses, 
it is natural to assume that the middle body moves on a geodesic passing through the
point $(0,0,|\kappa|^{-1/2})$ (the vertex of the hyperboloid's upper sheet),
while the other two bodies are on the same (hyperbolically rotating) geodesic,
and equidistant from it. 

Consequently we can seek hyperbolic Eulerian solutions of equal masses of the form:
\begin{equation}
{\bf q}=({\bf q}_1,{\bf q}_2, {\bf q}_3),\  {\rm with}\  {\bf q}_i=(x_i,y_i,z_i),\  i=1,2,3,\  
{\rm and}\label{hypsolu}
\end{equation}
\begin{align*}
x_1&=0,& y_1&=|\kappa|^{-1/2}\sinh\omega,& z_1&=|\kappa|^{-1/2}\cosh\omega,\\
x_2&=(\rho^2+\kappa^{-1})^{1/2},& y_2&=\rho\sinh\omega,& z_2&=\rho\cosh\omega,\\
x_3&=-(\rho^2+\kappa^{-1})^{1/2},& y_3&=\rho\sinh\omega,& z_3&=\rho\cosh\omega,
\end{align*}
where $\rho:=\rho(t)$ is the {\it size function} and $\omega:=\omega(t)$ is the
{\it angular function}. 

Indeed, for every time $t$, we have that $x_i^2(t)+y_i^2(t)-z_i^2(t)=\kappa^{-1},\ i=1,2,3$, which means that the bodies stay on the surface ${\bf H}_{\kappa}^2$, while lying on the same, possibly (hyperbolically) rotating, geodesic. Therefore representation \eqref{hypsolu} of the hypebolic Eulerian homographic orbits agrees with Definition \ref{defhyp}.

With the help of this analytic representation, we can define Eulerian homothetic orbits and hyperbolic Eulerian relative equilibria.

\begin{definition}
A hyperbolic Eulerian homographic solution is called Eulerian homothetic if the 
configuration of the system expands or contracts, but does not rotate hyperbolically.
\label{hypeulhomo}
\end{definition}

In terms of representation \eqref{hypsolu}, an Eulerian homothetic solution occurs
when $\omega(t)$ is constant, but $\rho(t)$ is not constant. A straightforward computation shows that if $\omega(t)$ is constant, the bodies lie initially on a geodesic, 
and the initial velocities are such that the bodies move along the geodesic 
towards or away from a triple collision at the point occupied by the fixed body.

Notice that Definition \ref{hypeulhomo} 
leads to the same orbits produced by Definition \ref{ellipticeulhomo}. While
the configuration of the former solution does not rotate hyperbolically, and
the configuration of the latter solution does not rotate elliptically, both fail to rotate while expanding or contracting. This is the reason why Definitions  \ref{ellipticeulhomo} and \ref{hypeulhomo} use the same name (Eulerian homothetic) for these types of orbits.

\begin{definition}
A hyperbolic Eulerian homographic solution is called a hyperbolic Eulerian relative equilibrium if the configuration rotates hyperbolically while its size remains constant.
\end{definition}

In terms of representation \eqref{hypsolu}, hyperbolic Eulerian relative equilibria occur
when $\rho(t)$ is constant, while $\omega(t)$ is not constant.

Unlike for Lagrangian and Eulerian solutions, hyperbolic Eulerian homographic
orbits exist only in the form of homothetic solutions or relative equilibria. As we will
further prove, any composition between a homothetic orbit and a relative equilibrium
fails to be a solution of system \eqref{second}.

\begin{theorem}
In the curved $3$-body problem of equal masses with $\kappa<0$, the only hyperbolic Eulerian homographic solutions are either Eulerian homothetic orbits or hyperbolic
Eulerian relative equilibria.
\end{theorem}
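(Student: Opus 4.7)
The plan is to substitute the parametrization \eqref{hypsolu} directly into the equations of motion \eqref{second} (with $\sigma=-1$ and $m_i=m$) and reduce them to scalar conditions on $\rho(t)$ and $\omega(t)$. Setting $R:=(\rho^2+\kappa^{-1})^{1/2}$, a direct computation yields
\[\kappa\mathbf{q}_1\odot\mathbf{q}_j=|\kappa|^{1/2}\rho\ \ (j=2,3),\qquad \kappa\mathbf{q}_2\odot\mathbf{q}_3=2|\kappa|\rho^2-1,\]
\[\kappa\dot{\mathbf{q}}_1\odot\dot{\mathbf{q}}_1=-\dot\omega^2,\qquad \kappa\dot{\mathbf{q}}_2\odot\dot{\mathbf{q}}_2=\kappa\dot{\mathbf{q}}_3\odot\dot{\mathbf{q}}_3=-\frac{\dot\rho^2}{R^2}-|\kappa|\rho^2\dot\omega^2,\]
with the cancellations in the velocity self-products relying on $\cosh^2\omega-\sinh^2\omega=1$. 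The $x$-equation for body~$1$ holds automatically, because $x_1\equiv 0$ and $x_2+x_3\equiv 0$, while body~$3$'s equations coincide with body~$2$'s via the symmetry $x\mapsto -x$; so only body~$1$'s $y$- and $z$-equations and body~$2$'s $y$- and $z$-equations remain.

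First I analyze body~$1$. The identities $y_2+y_3=2\rho\sinh\omega$ and $z_2+z_3=2\rho\cosh\omega$, together with $y_1=|\kappa|^{-1/2}\sinh\omega$ and $z_1=|\kappa|^{-1/2}\cosh\omega$, make the gravitational sum $\sum_{j=2,3}[\mathbf{q}_j-(\kappa\mathbf{q}_1\odot\mathbf{q}_j)\mathbf{q}_1]$ vanish componentwise in both equations. What remains is simply
\[\ddot y_1=\dot\omega^2\, y_1,\qquad \ddot z_1=\dot\omega^2\, z_1,\]
and after expanding $\ddot y_1,\ddot z_1$ from the parametrization, both identities reduce to $\cosh\omega\cdot\ddot\omega=0$ (respectively $\sinh\omega\cdot\ddot\omega=0$). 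Hence $\ddot\omega\equiv 0$, so $\omega(t)=\omega_0+wt$ for some constant $w$.

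Next I form the combination $\ddot y_2\cosh\omega-\ddot z_2\sinh\omega$ for body~$2$. On the left, using $\ddot\omega=0$, the $\ddot\rho$- and $\rho\dot\omega^2$-terms cancel exactly by $\cosh^2\omega-\sinh^2\omega=1$, leaving $2\dot\rho\dot\omega$. On the right-hand side of \eqref{second}, every contribution to the $y_2$-equation factors as $\sinh\omega\cdot A(\rho,\dot\rho,\dot\omega)$ and every contribution to the $z_2$-equation as $\cosh\omega\cdot A(\rho,\dot\rho,\dot\omega)$ with the \emph{same} function $A$: the gravitational pieces because $y_1,y_2,y_3$ all carry $\sinh\omega$ while $z_1,z_2,z_3$ all carry $\cosh\omega$ with matching coefficients depending only on~$\rho$, and the velocity-correction term because $y_2=\rho\sinh\omega$ and $z_2=\rho\cosh\omega$ split in the same way. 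Therefore the combination of the right-hand sides is $A\sinh\omega\cosh\omega-A\cosh\omega\sinh\omega=0$, and we obtain
\[2\dot\rho(t)\dot\omega(t)=0\quad\text{for all }t.\]
Since $\dot\omega=w$ is a constant, the dichotomy is immediate: either $w=0$, in which case $\omega$ is constant and we have a hyperbolic Eulerian homothetic orbit in the sense of Definition~\ref{hypeulhomo}, or $w\ne 0$, forcing $\dot\rho\equiv 0$ and hence a hyperbolic Eulerian relative equilibrium. The only real obstacle is the bookkeeping of the substitution; the conceptual point is the factorization of every $y$- and $z$-component through $\sinh\omega$ and $\cosh\omega$ respectively with matching coefficients, which is exactly what allows the combination $\cosh\omega\cdot(y_2\text{-eq})-\sinh\omega\cdot(z_2\text{-eq})$ to annihilate the entire force and isolate the pure kinematic quantity $2\dot\rho\dot\omega$.
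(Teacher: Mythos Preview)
Your proof is correct and follows the same route as the paper: substitute \eqref{hypsolu} into \eqref{second}, read off $\ddot\omega=0$ from body~1's equations, then extract $\rho\ddot\omega+2\dot\rho\dot\omega=0$ from body~2 (the paper calls this quantity $F$) to force the dichotomy $\dot\omega\equiv 0$ or $\dot\rho\equiv 0$. One small correction to your bookkeeping: body~2's $x$-equation is not disposed of by the symmetries you cite and does yield a nontrivial scalar condition (the paper's $E=0$), but since you only need necessary conditions for the classification and the $y$- and $z$-equations already deliver $F=0$, the omission is harmless for this theorem.
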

\begin{proof}
Consider for system \eqref{second} a solution of the form \eqref{hypsolu} that is
not homothetic. Then
$$\kappa{\bf q}_1\odot{\bf q}_2=\kappa{\bf q}_1\odot{\bf q}_3=|\kappa|^{1/2}\rho,$$
$$\kappa{\bf q}_2\odot{\bf q}_3=-1-2\kappa \rho^2,$$
\begin{align*}
{\dot x}_1&={\ddot x}_1=0,& {\dot y}_1&=|\kappa|^{-1/2}\dot\omega\cosh\omega,& {\dot z}_1&=|\kappa|^{-1/2}\sinh\omega,
\end{align*}
$${\dot x}_2=-{\dot x}_3={\rho\dot\rho\over{(\rho^2+\kappa^{-1})^{1/2}}},$$ 
$${\dot y}_2={\dot y}_3=\dot\rho\sinh\omega+\rho\dot\omega\cosh\omega,$$
$${\dot z}_2={\dot z}_3=\dot\rho\cosh\omega+\rho\dot\omega\sinh\omega,$$
$$\kappa\dot{\bf q}_2\odot\dot{\bf q}_2=-\dot\omega^2,\ \ \ \kappa\dot{\bf q}_2\odot\dot{\bf q}_2=
\kappa\dot{\bf q}_3\odot\dot{\bf q}_3=\kappa\rho^2\dot\omega^2-{\kappa\dot\rho^2\over{1+\kappa\rho^2}},$$
$${\ddot x}_2=-{\ddot x}_3={\rho\ddot\rho\over{(\rho^2+\kappa^{-1})^{1/2}}}+
{\kappa^{-1}\dot\rho^2\over{(\rho^2+\kappa^{-1})^{3/2}}},$$ 
$${\ddot y}_2={\ddot y}_3=(\ddot\rho+\rho\dot\omega^2)\sinh\omega+
(\rho\ddot\omega+2\dot\rho\dot\omega)\cosh\omega,$$
$${\ddot z}_2={\ddot z}_3=(\ddot\rho+\rho\dot\omega^2)\cosh\omega+
(\rho\ddot\omega+2\dot\rho\dot\omega)\sinh\omega.$$
Substituting these expressions in system \eqref{second}, we are led to an identity
corresponding to $\ddot x_1$. The other equations lead to the system
\begin{align*}
\ddot x_2, \ddot x_3:\ \ \ \ \ \ \ \ & E=0\\
\ddot y_1:\ \ \ \ \ \ \ \ & |\kappa|^{-1/2}\ddot\omega\cosh\omega=0,\\ 
\ddot z_1:\ \ \ \ \ \ \ \ & |\kappa|^{-1/2}\ddot\omega\sinh\omega=0,\\ 
\ddot y_2, \ddot y_3:\ \ \ \ \ \ \ \ & E\sinh\omega+F\cosh\omega=0,\\
\ddot z_2, \ddot z_3:\ \ \ \ \ \ \ \ & E\cosh\omega+F\sinh\omega=0,\\
\end{align*}
where
$$E:=E(t)=\ddot\rho+\rho(1+\kappa\rho^2)\dot\omega^2-{\kappa\rho\dot\rho^2\over{1+\kappa\rho^2}}+{m(1-4\kappa\rho^2)\over{4\rho^2|1+\kappa\rho^2|^{1/2}}},$$
$$F:=F(t)=\rho\ddot\omega+2\dot\rho\dot\omega.$$
This system can be obviously satisfied only if
\begin{equation}
\begin{cases}
\ddot\omega=0\cr
\ddot\omega=-{2\dot\rho\dot\omega\over\rho}\cr
\ddot\rho=-\rho(1+\kappa\rho^2)\dot\omega^2+{\kappa\rho\dot\rho^2\over{1+\kappa\rho^2}}-{m(1+4\kappa\rho^2)\over{4\rho^2|1+\kappa\rho^2|^{1/2}}}.\cr
\end{cases}
\end{equation}
The first equation implies that $\omega(t)=at+b$, where $a$ and $b$ are constants,
which means that $\dot\omega(t)=a$. Since we assumed that the solution is not
homothetic, we necessarily have $a\ne 0$. But from the second equation, we can
conclude that
$$\dot\omega(t)={c\over \rho^2(t)},$$
where $c$ is a constant. Since $a\ne 0$, it follows that $\rho(t)$ is constant,
which means that the homographic solution is a relative equilibrium. This conclusion
is also satisfied by the third equation, which reduces to
$$a^2=\dot\omega^2={m(1-4\kappa\rho)\over{4\rho^3|1+\kappa\rho^2|^{1/2}}},$$
being verified by two values of $a$ (equal in absolute value, but of opposite signs) for every $\kappa$ and $\rho$ fixed.
Therefore every hyperbolic Eulerian homographic solution that is not Eulerian homothetic
is a hyperbolic Eulerian relative equilibrium. This conclusion completes the proof.
\end{proof}

Since a slight perturbation of hyperbolic Eulerian relative equilibria, within the
set of  hyperbolic Eulerian homographic solutions, produces no orbits with 
variable size, it means that hyperbolic Eulerian relative equilibria of equal masses 
are unstable. So though they exist in a mathematical sense, as proved above (as
well as in \cite{Diacu1}, using a direct method), such equal-mass orbits are unlikely to be found in a (hypothetical) hyperbolic physical universe.


\bigskip

\noindent{\bf Acknowledgments.} Florin Diacu was supported by an NSERC Discovery Grant, while Ernesto P\'erez-Chavela enjoyed the financial support of CONACYT.


\end{document}